\theoremstyle{plain}
\newtheorem{thm}{Theorem}
\newtheorem{cor}[thm]{Corollary}
\newtheorem{lemma}[thm]{Lemma}
\newtheorem{prop}[thm]{Proposition}
\theoremstyle{definition}
\newtheorem{defn}[thm]{Definition}
\newtheorem{example}[thm]{Example}
\theoremstyle{remark}
\newtheorem{rmk}[thm]{Remark}
\newcommand{\BC}{{\mathbb{C}}}
\newcommand{\BH}{{\mathbb{H}}}
\newcommand{\BQ}{{\mathbb{Q}}}
\newcommand{\BV}{{\mathbb{V}}}
\newcommand{\BZ}{{\mathbb{Z}}}
\newcommand{\CJ}{{\mathcal J}}
\newcommand{\ra}{{\ \longrightarrow\ }}
\DeclareFontFamily{OT1}{rsfs}{}
\DeclareFontShape{OT1}{rsfs}{n}{it}{<-> rsfs10}{}
\DeclareMathAlphabet{\curly}{OT1}{rsfs}{n}{it}
\begin{document}
\title{A Serre derivative for even weight Jacobi forms}
\author{Georg Oberdieck}
\maketitle

\begin{abstract}
 Using deformed or twisted Eisenstein Series, we construct a Jacobi-Serre derivative on even-weight Jacobi forms that generalizes the classical Serre derivative on modular forms. As an application, we obtain Ramanujan equations for the index $1$ Eisenstein series $E_{4,1}, E_{6,1}$ and a newly defined $E_{2,1}$. Finally, we relate the deformed Eisenstein Series directly to the classical first Jacobi theta function.
\end{abstract}

\setcounter{tocdepth}{1}
\tableofcontents
\addtocounter{section}{-1}

\section{Introduction}
\subsection{The Serre derivative}
Let $\tau \in \BH, q = e^{2 \pi i \tau}$ and let $F(\tau) \in M_k$ be a modular form of weight $k$. The differential $F' := \frac{1}{2 \pi i} \frac{ \partial F}{\partial \tau}$ of $F$ fails to be modular, but can be completed to a modular form by adding a multiple of the non-modular second Eisenstein series $E_2$. We obtain a differential operator on modular forms
\[ \partial^{S} : M_k \ra M_{k+2}, \quad F \mapsto F' - \frac{k}{12} E_2 F, \]
called the Serre derivative. By the finite dimensionality of the vector space $M_k$ of modular forms of given weight $k$, it is then easy to obtain differential equations among modular forms, e.g. the Ramanujan equations \cite{R} for the Eisenstein series $E_{4}$ and $E_6$,
\begin{equation} \partial^{S} E_{2} + \frac{1}{12} E_2^2 = - \frac{1}{12} E_4, \quad \partial^{S}(E_4) = - \frac{1}{3} E_6, \quad \text{and} \quad \partial^{S}(E_6) = - \frac{1}{2} E_4^2. \label{002} \end{equation}

\subsection{Deformed Eisenstein series}
Jacobi forms are a natural two-variable generalization of modular forms introduced by Eichler and Zagier in \cite{EZ}. Let $z \in \BC,p = e^{2 \pi i z}$ and let $F(z,\tau)$ be a Jacobi form of index $m$ and weight $k$. As in the modular case, the differentials $F' = \frac{1}{2 \pi i} \frac{ \partial F}{\partial \tau}$ and $F^{\bullet} = \frac{1}{2 \pi i} \frac{ \partial F}{\partial z}$ are no longer Jacobi forms. The topic of the paper considers the basic question how to complete these differentials (and also higher ones) to honest Jacobi forms. \\

Let $B_n$ be the Bernoulli numbers given by $x/(e^x - 1) = \sum_n B_n x^n/n!$; in particular we have $B_1 = -1/2$. Define the \emph{deformed} or \emph{twisted Eisenstein series} $J_n(z,\tau)$ for all $n \geq 0$ by
\begin{equation}
 J_n(z,\tau) = \delta_{n,1} \frac{p}{p - 1} + B_n - n \sum_{k, r \geq 1} r^{n - 1} (p^k + (-1)^n p^{-k}) q^{k r}. \label{000}
\end{equation}
The name of the $J_n$ reminds of the fact, that they restrict to the classical Eisenstein series $E_{2k}$ at $z = 0$,
\[  J_{2k}(0,\tau) = B_{2k} E_{2k}(\tau)\quad \quad \text{ and } \quad \quad J_{2k + 1}(0,\tau) = 0. \]
Under the elliptic and modular transformations, $J_n(z,\tau)$ transforms like a Jacobi form of index $0$ and weight $n$, but adds additional lower order terms. Using these functions, one can complete differentials of Jacobi forms and obtain differential operators on Jacobi forms. This was already observed by Gaberdiel and Keller in \cite{GK}. As a result, they find a series of differential operators on all Jacobi forms, starting with the classical Heat operator.

Here we use the same principle for the slightly weaker setting of differential operators that are defined only on even-weight Jacobi forms. As a new result, we give a natural and very intersting such operator of degree $2$.

Let $\CJ_{k,m}$ be the space of Jacobi forms of weight $k$ and index $m$.

\begin{thm} \label{MainThm}
For all $k, m \geq 0$, there is a differential operator, called the Jacobi-Serre derivative,
\[ \partial^{J} : \CJ_{2k,m} \ra \CJ_{2k + 2,m}, \]
such that for every $F(z,\tau) \in \CJ_{2k,m}$ we have
\begin{equation} (\partial^{J}F)(0, \tau) = \partial^{S}(F(0,\tau)). \label{001} \end{equation}
$\partial^{J}$ is given by the formula
\[ \partial^{J}(F) = F' - \frac{k}{12} E_2 F + \frac{1}{1 - 4m} \Big( F^{\bullet \bullet} - J_1 F^{\bullet} + m J_2 F - \frac{m}{6} E_2 F \Big). \]
\end{thm}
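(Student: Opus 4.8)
The plan is to check the two assertions of the theorem separately. That $\partial^{J}F$ restricts to $\partial^{S}(F(0,\tau))$ is a short computation, worth doing first since it also fixes the scalar coefficients; that $\partial^{J}F$ is again a Jacobi form of weight $2k+2$ and index $m$ is the real content, and comes down to a cancellation of the quasi-modular and quasi-elliptic error terms produced by the individual summands.

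\emph{Restriction at $z=0$.} Evaluation at $z=0$ commutes with $\tfrac{1}{2\pi i}\partial_{\tau}$, so $F'(0,\tau)=(F(0,\cdot))'$ and $(E_{2}F)(0,\tau)=E_{2}\,F(0,\tau)$. Since $F$ has even weight it is even in $z$, so $F^{\bullet}$ is odd in $z$ and $F^{\bullet}(z,\tau)=2\pi i\,F^{\bullet\bullet}(0,\tau)\,z+O(z^{3})$, while $J_{1}$ has a simple pole $\tfrac{1}{2\pi i z}$ at $z=0$ by \eqref{000}; hence $(J_{1}F^{\bullet})(0,\tau)=F^{\bullet\bullet}(0,\tau)$ and the first two terms of the bracket cancel at $z=0$. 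Finally $J_{2}(0,\tau)=B_{2}E_{2}(\tau)=\tfrac16 E_{2}(\tau)$, so $mJ_{2}F-\tfrac m6 E_{2}F$ also vanishes at $z=0$. Thus the whole bracket vanishes at $z=0$ and $(\partial^{J}F)(0,\tau)=F'(0,\tau)-\tfrac{k}{12}E_{2}F(0,\tau)=\partial^{S}(F(0,\tau))$.

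\emph{Jacobi transformation.} I would first record the exact transformation behaviour of each ingredient under the two types of generators of the Jacobi group: the elliptic shifts $z\mapsto z+\lambda\tau+\mu$ ($\lambda,\mu\in\BZ$) and the modular substitutions $(z,\tau)\mapsto(\tfrac{z}{c\tau+d},\tfrac{a\tau+b}{c\tau+d})$. For $F',F^{\bullet},F^{\bullet\bullet}$ these follow from the defining transformation law of $F\in\CJ_{2k,m}$ by the chain rule, and one gets explicit anomalies: under the modular substitution $F'$ acquires, besides the expected automorphy factor, a term $\propto\tfrac{c}{c\tau+d}F$, a term $\propto cz\,F^{\bullet}$ and a term $\propto mc^{2}z^{2}F$, while $F^{\bullet\bullet}$ acquires (among others) an $F^{\bullet}$-anomaly equal to $4m$ times that of $F'$. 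For $E_{2}$ one uses the classical $E_{2}(\tfrac{a\tau+b}{c\tau+d})=(c\tau+d)^{2}E_{2}+\tfrac{12}{2\pi i}c(c\tau+d)$ (elliptic invariance). For $J_{1},J_{2}$ one uses their quasi-transformation laws, which, since $J_{1}=\tfrac{1}{2\pi i}\partial_{z}\log\theta_{1}$ (the relation established in the last section of the paper), read $J_{1}(\tfrac{z}{c\tau+d},\tfrac{a\tau+b}{c\tau+d})=(c\tau+d)J_{1}+cz$, $J_{1}(z+\lambda\tau+\mu,\tau)=J_{1}-\lambda$, and analogously $J_{2}(\tfrac{z}{c\tau+d},\tfrac{a\tau+b}{c\tau+d})=(c\tau+d)^{2}J_{2}+2cz(c\tau+d)J_{1}+c^{2}z^{2}$, $J_{2}(z+\lambda\tau+\mu,\tau)=J_{2}-2\lambda J_{1}+\lambda^{2}$. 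Substituting everything into $\partial^{J}(F)$ and collecting terms by type — the coefficient of $F$, of $F^{\bullet}$, of $F^{\bullet\bullet}$, and within each by the monomial in $z$ and in $\tfrac{c}{c\tau+d}$ (resp.\ $\lambda$) — one verifies that all anomalous contributions cancel. In this cancellation: the classical Serre term kills the pure-$F$ anomaly $\propto\tfrac{c}{c\tau+d}F$ of $F'$; the term $-\tfrac m6E_{2}F$ kills in the same way the pure-$F$ anomaly $\propto\tfrac{mc}{c\tau+d}F$ that $F^{\bullet\bullet}$ picks up; the $F^{\bullet}$-anomalies of $F^{\bullet\bullet}$ and $-J_{1}F^{\bullet}$ add up to $(4m-1)cz\,F^{\bullet}$, which is exactly why the prefactor must be $\tfrac{1}{1-4m}$; the two $J_{1}F$-anomalies (from $-J_{1}F^{\bullet}$ and from $mJ_{2}F$) cancel each other; and the residual $c^{2}z^{2}F$-anomalies, of combined size $m(4m-1)c^{2}z^{2}F$ in the bracket, match $(1-4m)$ times the $mc^{2}z^{2}F$-anomaly of $F'-\tfrac{k}{12}E_{2}F$. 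The elliptic case is the same kind of computation, and shorter — there the weight plays no role at all.

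The main obstacle I expect is purely organizational: the modular case produces a fairly large collection of error terms — derivatives of $F$ of bidegree up to $(2,1)$ in $(z,\tau)$, multiplied by monomials in $z$ and $\tfrac{c}{c\tau+d}$ — and one must keep track of all of them and confirm that an overdetermined system of cancellations holds exactly; the delicacy is entirely in getting the universal constants $\tfrac{1}{1-4m}$, $-\tfrac m6$ and $-\tfrac{k}{12}$ right. A cleaner presentation, worth attempting, would use the differential relation $J_{1}^{\bullet}+J_{1}^{2}=J_{2}+\tfrac1{12}E_{2}$ among the deformed Eisenstein series (provable by comparing $q$- and $z$-expansions, or directly from $J_{1}=\tfrac{1}{2\pi i}\partial_{z}\log\theta_{1}$) to rewrite the bracket as a single twisted second $z$-derivative, whose covariance is more transparent; but the decisive input is the same in any case.
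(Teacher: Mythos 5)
Your two computations are sound as far as they go: the restriction to $z=0$ is correct, and your anomaly bookkeeping for the transformation laws checks out (e.g.\ in the elliptic case the bracket picks up exactly $(1-4m)\lambda F^{\bullet}-m(1-4m)\lambda^{2}F$, which after division by $1-4m$ cancels the $-\lambda F^{\bullet}+m\lambda^{2}F$ anomaly of $F'$). Your route is also genuinely different from the paper's: the paper divides by $\phi_{-2,1}^{m}$ to pass to the space $\BV_{2m}$ of index-$0$ meromorphic Jacobi forms, builds the operator from the manifestly covariant pieces $D_\tau$, $D_z$, $K_2$, and chooses the linear combination $T_\tau-D_H$ so that the poles at $z=0$ cancel; you instead verify covariance of the explicit formula by direct cocycle cancellation. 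For the transformation-law part the two are essentially equivalent in labour, though the paper's framework also explains where the formula and the constant $\tfrac{1}{1-4m}$ come from and generalizes to higher degree.

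There is, however, a genuine gap. Transforming correctly under the Jacobi group and being holomorphic on $\BC\times\BH$ does not make $\partial^{J}F$ an element of $\CJ_{2k+2,m}$: the definition also requires a Fourier expansion $\sum_{n\geq 0}\sum_{r^{2}\leq 4nm}c(n,r)p^{r}q^{n}$, and your proposal never addresses this condition. It is not a formality here. Pure $\tau$- and $z$-derivatives and multiplication by $E_{2}$ do preserve the support condition $r^{2}\leq 4nm$, but the terms $J_{1}F^{\bullet}$ and $J_{2}F$ do not obviously do so: $J_{1}$ contains $\tfrac{p}{p-1}$ (not even a Laurent polynomial in $p$ at fixed power of $q$) and monomials $p^{\pm k}q^{kr}$ with $k$ arbitrarily large compared to $kr$, so multiplication by $J_{1}$ or $J_{2}$ can a priori destroy both the shape of the Fourier expansion and the inequality $r^{2}\leq 4nm$. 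This is exactly why the paper devotes a separate proposition to showing that $T_{\tau}$ maps $\CJ_{2k,m}$ to $\CJ_{2k+2,m}$, via the criterion that $q^{m\alpha^{2}}(T_{\tau}F)(\alpha\tau+\beta,\tau)$ stays bounded as $\tau\to i\infty$ for all $\alpha,\beta\in\BQ$, split into the cases $(\alpha,\beta)\notin\BZ^{2}$ (where boundedness of $J_{i}(\alpha\tau+\beta,\tau)$ is used) and $(\alpha,\beta)\in\BZ^{2}$ (where the Taylor coefficients $F_{0},F_{2}$ at $z=0$ enter). You would need to supply this argument, or at least reduce your operator to the known-good Heat operator plus an operator for which you prove it. A secondary, easily repaired omission: you should note explicitly that $-J_{1}F^{\bullet}$ is holomorphic at all lattice points (not just at $z=0$), which follows from regularity at $z=0$ together with the elliptic transformation law you establish.
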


By equation \eqref{001}, $\partial^{J}$ directly generalizes the Serre derivative to Jacobi forms of even weight. \\

The main application of Theorem \ref{MainThm} and similar constructions for higher differential operators is to find differential equations for Jacobi forms. We examplify this application by stating the index 1 analogs of Ramanujan equation \eqref{002}.

Let 
\begin{equation} \phi(z,\tau) = \phi_{-2,1}(z,\tau) = \frac{\phi_{10,1}}{\Delta(\tau)} \label{005} \end{equation}
be one of the generators of the algebra of even-weight weak Jacobi forms (\cite{EZ}, Thm 9.3) and let
\[ \wp(z,\tau) = \frac{1}{(2\pi i)^2} \Big( \frac{1}{z^2} + \sum_{n \geq 1} (2n+1) 2 \zeta(2n+2) E_{2n+2} z^{2n} \Big) \]
be the Weierstrasse $\wp$ function. We define the analog of $E_2$ for Jacobi forms of index $1$.
\begin{defn} \label{006} $E_{2,1}(z,\tau) := \phi(z,\tau) \Big( E_2(\tau) \wp(z,\tau) - \frac{1}{12} E_4 \Big). $\end{defn}

Although $E_{2,1}$ has several particular properties reminding of $E_2(\tau)$, see Lemma \ref{204}, the definition is rather ad-hoc and it would be interesting to find a more conceptual approach to $E_{2,1}(z,\tau)$.\footnote{The function $E_{2,1}$ introduced by Choie in \cite{C} is different from ours.} We state the Ramanujan equation for index $1$ Jacobi forms.
\begin{cor} \label{MainCor} Let $E_{4,1}$ and $E_{6,1}$ be the Jacobi-Eisenstein series of index $1$ and weight $4$ and $6$ respectively. Then
 \begin{equation} \label{003}
  \begin{aligned}
  \partial^J E_{2,1} + \frac{1}{12} E_2 E_{2,1} + \frac{1}{16} E_4' \phi_{-2,1} & = - \frac{1}{12} E_{4,1} \\
  \partial^J E_{4,1} & = -\frac{1}{3} E_{6,1} \\
  \partial^J E_{6,1} & = -\frac{1}{2} E_4 E_{4,1}.
\end{aligned}
\end{equation}
\end{cor}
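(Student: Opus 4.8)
The plan is to reduce each identity to a finite computation using the explicit structure of index–$1$ Jacobi forms. Recall from Eichler–Zagier that $\CJ_{*,1}$ is a free module of rank two over $M_*=\BC[E_4,E_6]$ on the generators $E_{4,1},E_{6,1}$; since $M_2=0$, this forces $\CJ_{4,1}=\BC E_{4,1}$, $\CJ_{6,1}=\BC E_{6,1}$ and $\CJ_{8,1}=\BC E_4E_{4,1}$, all one–dimensional. I also use the normalization $E_{k,1}(0,\tau)=E_k(\tau)$.

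For the last two identities this already suffices. By Theorem \ref{MainThm}, $\partial^J E_{4,1}\in\CJ_{6,1}=\BC E_{6,1}$ and $\partial^J E_{6,1}\in\CJ_{8,1}=\BC E_4E_{4,1}$, so $\partial^J E_{4,1}=c_1E_{6,1}$ and $\partial^J E_{6,1}=c_2E_4E_{4,1}$ for scalars $c_i$. Evaluating at $z=0$ and combining the compatibility \eqref{001} with the classical Ramanujan equations \eqref{002} gives $c_1E_6=\partial^S E_4=-\tfrac13E_6$ and $c_2E_4^2=\partial^S E_6=-\tfrac12E_4^2$, hence $c_1=-\tfrac13$ and $c_2=-\tfrac12$, which are the asserted relations.

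The first identity is the delicate one, since $E_{2,1}$ is not an honest Jacobi form — through $E_2$ and $\wp$ it is only quasi–Jacobi — so $\partial^J$ must be applied to it with care and one has to verify that the full left–hand side nonetheless lands in $\CJ_{4,1}$. I would first pass to the two Eichler–Zagier generators $\phi_{-2,1}$ and $\phi_{0,1}$ of the algebra of weak Jacobi forms: the product $\wp\,\phi_{-2,1}$ is a weak Jacobi form of weight $0$ and index $1$, the corresponding space being one–dimensional and spanned by $\phi_{0,1}$, and comparison at $z=0$ gives $\wp\,\phi_{-2,1}=\tfrac1{12}\phi_{0,1}$; consequently $E_{2,1}=\tfrac1{12}(E_2\phi_{0,1}-E_4\phi_{-2,1})$ and likewise $E_{4,1}=\tfrac1{12}(E_4\phi_{0,1}-E_6\phi_{-2,1})$. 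Expanding the left–hand side with the help of Theorem \ref{MainThm}, the Leibniz rule, and the classical relation $E_2'=\tfrac1{12}(E_2^2-E_4)$ for the arising term $E_2'\phi_{0,1}$, one separates it into a genuinely modular/Jacobi part lying in $\CJ_{4,1}$ (assembled from $E_2\phi_{0,1}$, $E_4\phi_{-2,1}$ and their $\partial^J$–completions) and an "anomalous" part carrying the $E_2$–dependence — terms proportional to $E_2^2$, to $E_2\phi_{-2,1}^{\bullet}$ paired with the simple pole of $J_1$ at $z=0$, and to the non–modular tails of $J_1,J_2$. The content of the identity is precisely that the two correction terms $\tfrac1{12}E_2E_{2,1}$ and $\tfrac1{16}E_4'\phi_{-2,1}$ are exactly what is needed to annihilate this anomalous part; granting that, the left–hand side lies in $\CJ_{4,1}=\BC E_{4,1}$ and the factor $-\tfrac1{12}$ follows by matching one Fourier coefficient (for instance the $q^0$–coefficient, on which all Jacobi forms involved are constant in $z$), while restriction to $z=0$ recovers the classical Ramanujan equation $\partial^S E_2+\tfrac1{12}E_2^2=-\tfrac1{12}E_4$.

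The main obstacle is this anomaly cancellation. Carried out through transformation laws, it requires the precise elliptic and modular behaviour of the deformed Eisenstein series $J_1,J_2$ — the "index $0$, weight $n$, plus explicit lower order terms" transformation referred to after \eqref{000} — together with those of the quasimodular $E_2$ and of $\phi_{-2,1}^{\bullet},\phi_{-2,1}^{\bullet\bullet}$. An alternative that avoids such bookkeeping is to work entirely in the elliptic variable: using the Laurent expansion in \eqref{000}, the differential equation satisfied by $\wp$, and the identification of $J_1,J_2$ with logarithmic derivatives of the Jacobi theta function $\theta_1$ established in the final section of the paper, one expresses $\partial^J E_{2,1}$, $E_{2,1}$, $E_{4,1}$ and $\phi_{-2,1}$ as rational expressions in $\wp$, $\wp'=\partial_z\wp$ and $E_2,E_4,E_6$, after which \eqref{003} becomes a polynomial identity checked term by term. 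A useful partial check throughout is restriction to $z=0$: there $\phi_{-2,1}$ and $\phi_{-2,1}^{\bullet}$ vanish, $J_1$ has a simple pole whose residue forces $(F^{\bullet\bullet}-J_1F^{\bullet})(0,\tau)=0$, and $J_2(0,\tau)=\tfrac1{12}E_2$.
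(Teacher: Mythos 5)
The paper disposes of this corollary with ``a direct check,'' so there is no single canonical argument to compare against; judged on its own terms, your treatment of the second and third equations is complete, correct, and genuinely different from brute-force verification. Using $\CJ_{*,1}=M_*E_{4,1}\oplus M_*E_{6,1}$ and $M_2=0$ to get $\dim\CJ_{6,1}=\dim\CJ_{8,1}=1$, then invoking Theorem \ref{MainThm} to place $\partial^JE_{4,1}$ and $\partial^JE_{6,1}$ in those one-dimensional spaces and pinning the constants via \eqref{001} and the classical Ramanujan equations, is a clean, computation-free route that the paper does not spell out. The identities $\wp\,\phi_{-2,1}=\tfrac1{12}\phi_{0,1}$ and hence $E_{2,1}=\tfrac1{12}(E_2\phi_{0,1}-E_4\phi_{-2,1})$ are also correct and a sensible starting point for the first equation.

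However, for the first equation your argument has a genuine gap: the decisive step --- that the correction terms $\tfrac1{12}E_2E_{2,1}+\tfrac1{16}E_4'\phi_{-2,1}$ exactly cancel the ``anomalous'' (non-Jacobi) part of $\partial^JE_{2,1}$ so that the left-hand side lands in $\CJ_{4,1}$ --- is introduced with ``granting that'' and never verified. Since $E_{2,1}$ is not a Jacobi form, Theorem \ref{MainThm} gives you nothing here, and this cancellation \emph{is} the entire content of the identity (including the otherwise unexplained coefficient $\tfrac1{16}$); once it is granted, the rest is the easy part. Either of the two routes you name (tracking the elliptic/modular defects of $E_2$, $J_1$, $J_2$, or rewriting everything as polynomials in $\wp$, $\wp^{\bullet}$, $E_2$, $E_4$, $E_6$) would work, but one of them must actually be carried out. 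Also note a factual slip in your ``partial check'': $J_2(0,\tau)=B_2E_2=\tfrac16E_2$, not $\tfrac1{12}E_2$; the value $\tfrac16E_2$ is exactly what makes $mJ_2(0)F_0-\tfrac m6E_2F_0$ vanish in the verification of \eqref{001}, so the wrong constant would derail any computation that uses it.
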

After restricting \eqref{003} to $z = 0$, we obtain Ramanujans original equations \eqref{002}.

\subsection{Theta functions} Unrelated to the differential operators above, we derive in the last part of the paper an interesting relation between deformed Eisenstein series and Jacobi theta functions. Let
\begin{equation} \theta_1(z,\tau) = -i q^{1/8} (p^{1/2} - p^{-1/2}) \prod_{m \geq 1} (1 - q^m) (1 - p q^m) (1 - p^{-1} q^m) \label{008} \end{equation}
be the classical first Jacobi theta function. A straight-forward computation shows, that the function $J_1(z,\tau)$ arises as the logarithmic derivative of $\theta_1$,
\begin{equation} \frac{\theta_1^\bullet}{\theta_1} = J_1. \label{004} \end{equation}
This can be generalized as follows. For a formal variable $x$, let
\begin{equation} \curly{J} = \sum_{n \geq 0} \frac{J_n}{n !} x^n \label{007} \end{equation}
be the generating functions for the $J_n$ functions.
\begin{thm} \label{ThetaThm} We have
\[ \curly{J} = x \cdot \theta_1^\bullet(0, \tau) \cdot \frac{ \exp(x \partial_z) \cdot \theta_1(z,\tau) }{ \theta_1(\frac{x}{2 \pi i},\tau) \theta_1(z,\tau) }, \]
where $\partial_z = \frac{1}{2 \pi i} \frac{\partial}{\partial z}$ and
\[ \exp(x \partial_z) \cdot \theta_1(z,\tau) := \sum_{k \geq 0} \frac{x^k}{k!} \partial_z^k( \theta_1(z,\tau) ). \]
\end{thm}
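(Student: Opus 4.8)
The plan is to recognize the right--hand side as a normalization of the Kronecker--Eisenstein theta function and to match it, via its $q$--expansion, with the closed form of the generating series $\curly{J}$. First, since $\partial_z = \tfrac{1}{2\pi i}\tfrac{\partial}{\partial z}$, Taylor's theorem gives $\exp(x\partial_z)\,\theta_1(z,\tau) = \theta_1\!\bigl(z+\tfrac{x}{2\pi i},\tau\bigr)$, so, writing $w := \tfrac{x}{2\pi i}$ and $t := e^x = e^{2\pi i w}$, the assertion is equivalent to
\[ \curly{J}(z,\tau) \;=\; x\,\theta_1^\bullet(0,\tau)\,\frac{\theta_1(z+w,\tau)}{\theta_1(w,\tau)\,\theta_1(z,\tau)} \;=\; \tfrac{x}{2\pi i}\,F(z,w,\tau), \]
where $F(z,w,\tau) := \theta_1'(0,\tau)\,\theta_1(z+w,\tau)\big/\bigl(\theta_1(z,\tau)\theta_1(w,\tau)\bigr)$ is the Kronecker theta function. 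As a first sanity check, this right--hand side tends to $1 = B_0 = J_0$ as $w\to 0$, and expanding $\theta_1(z+w)/\theta_1(z) = 1 + x\,\theta_1^\bullet(z)/\theta_1(z) + O(x^2)$ together with $\theta_1(w) = x\,\theta_1^\bullet(0)\,(1+O(x^2))$ shows that its coefficient of $x^1$ is $\theta_1^\bullet/\theta_1 = J_1$, in agreement with \eqref{004}.

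Second, I would put $\curly{J}$ into closed form directly from \eqref{000}. Using $\sum_{n\ge 0}B_n x^n/n! = x/(e^x-1)$, the identity $n\,x^n/n! = x^n/(n-1)!$, and $\sum_{m\ge 0}(xr)^m/m! = e^{xr}$, the three terms of \eqref{000} assemble --- after the elementary rewritings $\tfrac{p}{p-1} = \tfrac12 + \tfrac12\tfrac{p+1}{p-1}$ and $\tfrac1{e^x-1} = -\tfrac12 + \tfrac12\tfrac{t+1}{t-1}$ --- into
\[ \curly{J}(z,\tau) \;=\; x\left( \tfrac12\,\tfrac{p+1}{p-1} + \tfrac12\,\tfrac{t+1}{t-1} - \sum_{k,r\ge 1} q^{kr}\bigl(p^k t^r - p^{-k} t^{-r}\bigr) \right). \]
This step is routine; it reduces the theorem to the classical Fourier--Jacobi expansion of $F$.

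Third --- the main step --- it remains to show that $\tfrac{x}{2\pi i}\,F(z,w,\tau)$ equals this closed form. I would argue by elliptic function theory in the variable $z$, for fixed generic $w$ and $\tau$. Using the transformation laws $\theta_1(z+1,\tau) = -\theta_1(z,\tau)$ and $\theta_1(z+\tau,\tau) = -e^{-\pi i\tau - 2\pi i z}\theta_1(z,\tau)$, the function $x\,\theta_1^\bullet(0)\,\theta_1(z+w)/(\theta_1(w)\theta_1(z))$ is $1$--periodic in $z$, has a simple pole at each $z\in\BZ+\BZ\tau$ with residue $\tfrac{x}{2\pi i}$, has a simple zero at each $z\in -w+\BZ+\BZ\tau$, and satisfies $f(z+\tau) = e^{-x}f(z)$. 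One then checks that the closed form of $\curly{J}$ shares all these properties: $1$--periodicity in $z$ is manifest; the simple pole at $z\in\BZ$ with residue $\tfrac{x}{2\pi i}$ and the simple zero at $p = t^{-1}$ (where the first two bracketed terms cancel and the double sum cancels under $(k,r)\leftrightarrow(r,k)$) are read off directly; and the remaining poles, zeros and the quasi--periodicity under $z\mapsto z+\tau$ follow once the double series is suitably re--summed. The quotient of the two functions is then elliptic in $z$, holomorphic and nonvanishing, hence constant, and the constant is $1$ by comparing the leading behaviour $\sim \tfrac{x}{2\pi i z}$ of both sides as $z\to 0$. Combining the three steps proves the theorem.

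I expect the main obstacle to be precisely this re--summation. The double series $\sum_{k,r\ge1}q^{kr}(p^k t^r - p^{-k}t^{-r})$ converges only in a restricted region of $(z,w)$ (roughly $|q| < |p|,|t| < 1$), so the substitution $z\mapsto z+\tau$, which multiplies $p$ by $q$, leaves this region; verifying $f(z+\tau) = e^{-x}f(z)$ and locating the poles on the full lattice therefore requires a careful rearrangement of the series --- equivalently an appeal to the Lipschitz summation formula $\sum_{n\in\BZ}(z+n)^{-1} = \pi i\,(p+1)/(p-1)$ and its $z$--derivatives --- rather than a naive term--by--term computation. A computational alternative that avoids elliptic function theory is to take $p\,\partial_p$ of the logarithm of the infinite--product expression for $F$ coming from \eqref{008}, expand each factor $(1-p^{\pm1}q^m)^{-1}$ and $(1-p^{\pm1}t^{\pm1}q^m)^{-1}$ as a geometric series, and match the result with the $z$--derivative of the closed form above; there the only difficulty is the bookkeeping of the several triple sums and of the constant of integration.
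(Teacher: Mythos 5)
Your approach is correct in outline but genuinely different from the paper's. You treat the full generating series $\curly{J}$ at once and identify the right-hand side as the Kronecker theta function $\theta_1^\bullet(0)\,\theta_1(z+w)/\bigl(\theta_1(z)\theta_1(w)\bigr)$, proving equality by the classical elliptic-function argument in $z$ (matching simple poles, the zero at $z=-w$, $1$-periodicity and the multiplier $e^{-x}$ under $z\mapsto z+\tau$, then Liouville). The paper instead works coefficient-by-coefficient in $x$: it proves the equivalent statement $J_n=F_n$ with $F_n=\frac{1}{\theta_1}\sum_k\binom{n}{k}h_{n-k}\theta_1^{k\bullet}$ by induction on $n$, using the elliptic transformation law of $\theta_1^{k\bullet}$, the completions $K_n$ and $\widetilde{K}_n$ from Section~1 to compare principal parts, and the values $J_{2m}(0)=B_{2m}E_{2m}$ to fix the constants. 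Your route is the more classical one and, once the Fourier expansion of the Kronecker theta function is in hand, gives the theorem in one stroke; the paper's route stays inside its own $K_n$-machinery and avoids any convergence or re-summation issues. Your closed form for $\curly{J}$ and all the local checks (residue $x/(2\pi i)$ at $z=0$, cancellation at $p=t^{-1}$ via $(k,r)\leftrightarrow(r,k)$, the $x^0$ and $x^1$ coefficients) are correct. The one step you leave unexecuted --- re-summing the double series to establish the quasi-periodicity $\curly{J}(z+\tau)=e^{-x}\curly{J}(z)$ and the pole locations on the full lattice --- is real work via Lipschitz summation, but note that it can be bypassed entirely using the paper's own Lemma~\ref{102}: summing $J_n(z+\lambda\tau+\mu)=\sum_k(-1)^{n+k}\binom{n}{k}\lambda^{n-k}J_k$ against $x^n/n!$ gives $\curly{J}(z+\lambda\tau+\mu)=e^{-\lambda x}\curly{J}(z)$ directly, and the pole analysis of the $J_n$ in the fundamental domain (only $J_1$ has a pole, simple at $z=0$) is already in Section~\ref{0000}. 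With that substitution your argument closes completely and is arguably shorter than the paper's induction.
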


As an application, we obtain by a trivial relation among the deformed Eisenstein series a new\footnote{to the best of the author's knowledge} sequence of differential relations among the first theta function, see section \ref{thetaapplication}.

\subsection{Plan of the paper}
The first section concerns the study of the deformed Eisenstein series $J_n$. We first re-derive their transformation behaviour for modular ($(z,\tau) \mapsto (z/\tau, -1/\tau)$) and the elliptic ($(z,\tau) \mapsto (z + \lambda \tau + \mu, \tau)$) transformations. Then we complete $J_n$ to meromorphic Jacobi forms $K_n$ of index $0$ and weight $n$ via,
\[ K_n = \sum_{k = 0}^{n} (-1)^{n+k} \binom{n}{k} J_k J_1^{n-k}. \]
$K_n$ is an element of weight $n$ in the vectorspace $\BV_{n}$ of meromorphic Jacobi forms of index $0$ (i.e. double periodic ones) with only pole at $0$ of order $\leq n$. We show how to use this to easily derive relation among derivatives and products of the functions $J_n$. \\

In the second section, we prove the main theorem \ref{MainThm}. For this, we use the basic fact, that the vector space of Jacobi forms of index $m$, $\CJ_{\ast,m}$, is isomorphic to $\BV_m$ by the map $F \mapsto F/\phi^m$. After reexpressing differentials of Jacobi forms in $\BV_m$, we can easily write down differential operators for Jacobi forms. This gives a framework to also deal with more complicated differential equations and operators for Jacobi forms. We use this to find the definition for $\partial^J$ and prove Corollary \ref{MainCor}.\\

Finally, in the last section we prove Theorem \ref{ThetaThm} using the completions $K_n$. We also give a definition of deformed Eisenstein series $J_{i,n}$ corresponding to the other classical theta functions and prove an analogous statement for them.

\subsection{Relation to other work}
Deformed Eisenstein series were considered already in \cite{GK} and \cite{MTZ} in the process of studying $N = 2$ superconformal field theories and differential equations for elliptic genera (which are vector valued weak Jacobi Forms). In particular, in \cite{GK} Gaberdiel and Keller study the modular and periodic properties of deformed Eisenstein Series and the proofs given here are analog. By arguments from conformal field theory, \cite{GK} then obtain a set differential operators for (all) weak Jacobi forms. In contrast, our method is completely elementary. \\

Differential equation for Jacobi forms and deformed Eisenstein series appear also when studying Gromov-Witten invariants. The enumerative geometry of K3 surfaces and Hilbert schemes of K3 surfaces is encoded in various modular and Jacobi forms, see \cite{MPT}, \cite{KKV} and \cite{HilbK3}. In \cite{HilbK3} the calculation of the GW invariants are reduced to solving an explicit set of partial differential equations in 2 variables, that is obtained by applying WDVV equations (see \cite{FP}) in the case of the Hilbert scheme of $2$ points of $\mathbb{P}^1 \times E$. Here $E$ is a smooth elliptic curve. The solution to this system is given by Jacobi forms of index $1$ and deformed Eisenstein series. The equations give then complicated differential equations intertwining Jacobi forms and deformed Eisenstein series. Understanding this system was the author's main motivation for studying these functions in more generality.

\subsection{Acknowledgements} I would like to thank the following people. The programmers behind the math software \textit{SAGE} and \textit{mpmath} for their work. \"Ozlem Imamoglu, Jonas Jermann, Aaron Pixton, Martin Raum and Emanuel Scheidegger for various discussions and comments on the subject. And my advisor Rahul Pandharipande for his constant support and patience.

\section{Deformed Eisenstein series} \label{0000}
\subsection{Transformation properties}
We prove the transformation property of $J_n$ for the elliptic and modular transformations.
\begin{lemma} \label{102}
For $\lambda, \mu \in \BZ$,
\[ J_n(z + \lambda \tau + \mu, \tau) = \sum_{k = 0}^{n} (-1)^{n + k} \binom{n}{k} \lambda^{n-k} J_k(z,\tau). \]
\end{lemma}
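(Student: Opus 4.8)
Translation by $\mu\in\BZ$ fixes $p=e^{2\pi i z}$ and $q=e^{2\pi i\tau}$, so $J_n(z+\mu,\tau)=J_n(z,\tau)$ since every term of \eqref{000} is a Laurent polynomial in $p$; this is the case $\lambda=0$ and reduces us to translations by $\tau$. Instead of handling each $J_n$ separately, I would pass to the generating series $\curly{J}=\sum_{n\geq0}\frac{J_n}{n!}x^n$ of \eqref{007}: a one-line reindexing of the binomial sum shows that, for a fixed $\lambda$, the claimed identity is equivalent to
\[ \curly{J}(z+\lambda\tau,\tau)=e^{-\lambda x}\,\curly{J}(z,\tau). \]
Since $e^{-(\lambda+1)x}=e^{-x}e^{-\lambda x}$ and translations compose, it is enough to prove the single case $\lambda=1$: the cases $\lambda\geq2$ then follow by iteration, and applying the $\lambda=1$ identity with $z$ replaced by $z-\tau$ gives $\curly{J}(z-\tau,\tau)=e^{x}\curly{J}(z,\tau)$, hence all $\lambda<0$.

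For $\lambda=1$ the plan is first to put $\curly{J}$ in closed form. Summing \eqref{000} against $x^n/n!$ and using the elementary identities $\sum_n B_n x^n/n!=x/(e^x-1)$, $\sum_{n\geq1}\frac{x^n r^{n-1}}{(n-1)!}=x e^{xr}$ and $\sum_{n\geq1}\frac{(-1)^n x^n r^{n-1}}{(n-1)!}=-x e^{-xr}$, one obtains in the region of convergence
\[ \curly{J}=x\left(\frac{p}{p-1}+\frac{1}{e^x-1}-\sum_{r\geq1}e^{xr}\frac{pq^r}{1-pq^r}+\sum_{r\geq1}e^{-xr}\frac{p^{-1}q^r}{1-p^{-1}q^r}\right), \]
whose $x^1$-coefficient equals $\frac{p}{p-1}-\tfrac12-\sum_{k,r\geq1}(p^k-p^{-k})q^{kr}=J_1$, in accordance with \eqref{004}. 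Now substitute $p\mapsto pq$ and shift the index $r$ in the two series: the first becomes $e^{-x}\sum_{r\geq1}e^{xr}\frac{pq^r}{1-pq^r}$ minus the boundary term $\frac{pq}{1-pq}$, and the second becomes $e^{-x}\sum_{r\geq1}e^{-xr}\frac{p^{-1}q^r}{1-p^{-1}q^r}$ plus the new $r=0$ term $\frac{e^{-x}}{p-1}$. The term $\frac{pq}{1-pq}$ cancels $\frac{pq}{pq-1}$, and after subtracting $e^{-x}\curly{J}$ everything collapses to the elementary identity $\frac{1-e^{-x}}{e^x-1}=e^{-x}$. This proves $\curly{J}(z+\tau,\tau)=e^{-x}\curly{J}(z,\tau)$, and comparing coefficients of $x^n/n!$ gives the lemma.

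The one delicate point is that substituting $p\mapsto pq$ term by term into the double $q$-series \eqref{000} destroys its domain of absolute convergence — e.g.\ the $r=1$ part of the $p^{-k}q^{k(r-1)}$ sum becomes $\sum_{k\geq1}p^{-k}$, the expansion of the rational function $1/(p-1)$ that is valid only for $|p|>1$. This mismatch is exactly the source of the lower-order corrections $(-1)^{n+k}\binom{n}{k}\lambda^{n-k}J_k$ with $k<n$, and it is best handled by working with the closed-form expression above, each of whose summands is an honest meromorphic function of $z$ (equivalently, by running the computation in the overlap of the two strips on which \eqref{000} converges at $z$ and at $z+\tau$ and then continuing). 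I expect this bookkeeping to be the only genuine obstacle; the remaining steps are routine. A generating-function-free variant — probably closer to the argument of \cite{GK} — is to manipulate \eqref{000} directly: after $p\mapsto pq$, split off the $r=1$ piece of the $p^{-k}q^{k(r-1)}$ sum (which supplies the $1/(p-1)$ responsible for the $\binom{n}{1}$ correction), reindex the rest, verify the resulting finite identity among $J_0,\dots,J_n$, and finally pass from $\lambda=1$ to arbitrary $\lambda\in\BZ$ by the same binomial computation.
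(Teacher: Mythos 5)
Your proposal is correct and is essentially the same computation as the paper's one-line proof (``replace $p$ by $pq^{\lambda}$ in \eqref{000} and calculate''): packaging it via the generating series $\curly{J}$, reducing to $\lambda=1$, and tracking the boundary terms of the reindexed $r$-sums is just a tidier organization of that same bookkeeping, and your treatment of the convergence issue via the closed-form meromorphic summands is sound.
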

\begin{proof}
Replace $p$ by $p q^{\lambda}$ in the right hand side of \eqref{000} and calculate.
\end{proof}

Note next, that we have for all $k \geq 1$ the basic relation
\begin{equation} \frac{k}{k+1} J_{k+1}^\bullet = J_{k}' \label{diffrelation}. \end{equation}
\begin{lemma}[\cite{GK}] $\displaystyle{ J_n(z / \tau, -1/ \tau) = \sum_{k = 0}^n \binom{n}{k} z^{n-k} \tau^k J_k }$. \end{lemma}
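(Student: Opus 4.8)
The plan is to prove the modular transformation law for $J_n$ directly from the $q$-expansion \eqref{000}, following the strategy indicated in the paper (``the proofs given here are analog'' to those in \cite{GK}). The cleanest route is not to manipulate the $q$-series termwise — that is painful because the modular transformation scrambles the double sum — but rather to express $J_n$ in terms of a classical, well-understood object whose $S$-transformation is already known. The natural candidate is the logarithmic derivative $\theta_1^\bullet/\theta_1 = J_1$ from \eqref{004}, together with its $z$-derivatives; indeed, once one knows the transformation of $J_1$, the relation \eqref{diffrelation} plus Lemma \ref{102} should pin down all the $J_n$.

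Concretely, here is how I would carry it out. First, I would establish the case $n=1$: using $J_1 = \theta_1^\bullet/\theta_1$ and the classical transformation $\theta_1(z/\tau,-1/\tau) = -i(-i\tau)^{1/2} e^{\pi i z^2/\tau}\,\theta_1(z,\tau)$, take the logarithmic $z$-derivative. Differentiating $\log\theta_1(z/\tau,-1/\tau)$ with respect to $z$ produces $\tfrac1\tau(\theta_1^\bullet/\theta_1)(z/\tau,-1/\tau)$ on one side, and $z + \tau\,(\theta_1^\bullet/\theta_1)(z,\tau)$ on the other (the $z$ coming from the exponential factor, after accounting for the $2\pi i$ in $\partial_z$), giving $J_1(z/\tau,-1/\tau) = z + \tau J_1(z,\tau)$, which is exactly the $n=1$ case. (Alternatively, for a self-contained argument one can derive the $n=1$ case from Eisenstein summation / the Lipschitz formula applied to $J_1$.) Next, I would run an induction on $n$. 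Suppose the claimed formula holds for $J_1,\dots,J_n$. Apply $\partial_z = \tfrac1{2\pi i}\partial/\partial z$ to both sides of the identity for $J_{n+1}$? — no: instead use \eqref{diffrelation} in the form $J_n' = \tfrac{n}{n+1}J_{n+1}^\bullet$ together with the chain rule under $(z,\tau)\mapsto(z/\tau,-1/\tau)$. Here $\partial_\tau$ acting on $f(z/\tau,-1/\tau)$ produces $-\tfrac{z}{\tau^2}f^\bullet + \tfrac1{\tau^2}f'$ evaluated at $(z/\tau,-1/\tau)$, so the operator $\tau^2\partial_\tau + z\partial_z = \tau^2\partial_\tau + \tfrac{z}{2\pi i}\partial_z$ (appropriately normalized with the $2\pi i$'s) transforms nicely. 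Differentiating the (assumed) formula for $J_n$ and matching $f'$- and $f^\bullet$-parts, together with the known formula for $J_n^\bullet$ (obtainable from the $J_n$ formula by applying $\partial_z$, since the right side of Lemma \ref{102}'s analog is polynomial in $z$), yields the formula for $J_{n+1}$. One checks the binomial identity $\binom{n+1}{k} = \binom{n}{k} + \binom{n}{k-1}$ makes the bookkeeping close up.

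An even slicker alternative, which I would probably present if the bookkeeping above gets messy, is to package everything into the generating function $\curly J = \sum_n J_n x^n/n!$ and prove a single functional equation. From \eqref{000} one computes $\curly J$ in closed form (it is essentially $\tfrac{x p}{p-1}$ plus a geometric-type double sum that resums to a ratio of theta-like products — this is morally Theorem \ref{ThetaThm}, though I would only need a weak form of it here, or could avoid it entirely). The claimed identity is equivalent to
\[ \curly J\!\left(\tfrac{z}{\tau}, -\tfrac1\tau;\, \tfrac{x}{\tau}\right) = e^{xz/\tau}\,\curly J(z,\tau;\,x), \]
where I have restored $x$ as an explicit argument; this exponential prefactor is exactly the generating-function avatar of $z^{n-k}\tau^k$ in the statement (with the $\tau^k$ absorbed by rescaling $x\mapsto x/\tau$). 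This reduces the whole lemma to a single scalar identity between closed-form expressions, which follows from the classical $\theta_1$ transformation law in one line.

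The main obstacle is bookkeeping, not ideas: the $2\pi i$ normalizations in $F', F^\bullet$ and in $\partial_z$ must be tracked with care (a single misplaced factor flips the sign or scale of the $z$-term), and in the inductive approach one must correctly separate the $f'$ and $f^\bullet$ contributions after applying the chain rule to $f(z/\tau,-1/\tau)$. I would therefore lean toward the generating-function proof, where essentially all of this is automated by the one known transformation law for $\theta_1$, and the only thing to verify is that the closed form for $\curly J$ extracted from \eqref{000} matches the theta-quotient — a finite, mechanical check.
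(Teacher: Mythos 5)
Your primary (inductive) route is essentially the paper's own proof: establish the $n=1$ case, then induct using the relation $\frac{k}{k+1}J_{k+1}^\bullet = J_k'$ of \eqref{diffrelation} and the chain rule under $(z,\tau)\mapsto(z/\tau,-1/\tau)$. (Your base case via $J_1=\theta_1^\bullet/\theta_1$ and the classical $\theta_1$ transformation is a legitimate substitute for the paper's route through the Laurent expansion \eqref{101} and the $E_2$ anomaly.) However, there is one step missing from your induction: differentiating the transformation law for $J_n$ in $\tau$ and converting via \eqref{diffrelation} only yields the transformation law for $J_{n+1}^{\bullet}$, i.e.\ for the $z$-derivative of $J_{n+1}(z/\tau,-1/\tau)$. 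To recover $J_{n+1}(z/\tau,-1/\tau)$ itself you must integrate in $z$, and the resulting constant of integration is an undetermined function of $\tau$. The paper pins it down by setting $z=0$ and using $J_{2g}(0,\tau)=B_{2g}E_{2g}(\tau)$ and $J_{2g+1}(0,\tau)=0$; your write-up never addresses this, and without it the induction does not close. The fix is exactly that evaluation at $z=0$, so the gap is small but real.

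Your ``slicker'' generating-function alternative is a genuinely different route, and the functional equation $\curly{J}(z/\tau,-1/\tau;x/\tau)=e^{xz/\tau}\curly{J}(z,\tau;x)$ is the correct repackaging of the statement. But the input it requires --- the closed form of $\curly{J}$ as a theta quotient --- is precisely Theorem \ref{ThetaThm}, i.e.\ the Kronecker--Eisenstein identity, which is proved only in the last section of the paper (via the completions $K_n$, whose modularity in turn is established using the present lemma). Resumming \eqref{000} gives a Lambert-type double series, and identifying it with $x\,\theta_1^\bullet(0)\theta_1(z+\tfrac{x}{2\pi i})/(\theta_1(\tfrac{x}{2\pi i})\theta_1(z))$ is not ``a finite, mechanical check'' but a classical nontrivial identity. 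So this route works only if you import that identity from the literature; as a self-contained proof at this point in the paper it either overstates what is mechanical or risks circularity.
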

\begin{proof}
Using the Taylor expansion $p = \sum_{k \geq 0} (2 \pi i z)^k / k!$ in the Fourier expansion of $J_1$, one obtains
\begin{equation} J_1(z,\tau) = \frac{1}{w} + \sum_{n \geq 1} \frac{w^{2n-1}}{(2n)!} \cdot \Big( B_{2n} E_{2n}(\tau) \Big), \label{101} \end{equation}
where $w = 2 \pi i z$. By the transformation property of the Eisenstein series, we then have
\[ J_1(z / \tau, -1/ \tau) = z + \tau J_1(z,\tau). \]
By induction, we proceed now as follows. Suppose we know how $J_i(z,\tau)$ transforms under the substition $(z, \tau) \mapsto (z / \tau, - 1/ \tau)$. Then, by differentiating the transformation equation for $J_i$ with respect to $\tau$, and using \eqref{diffrelation}, we obtain an expression for $J_{i+1}^\bullet(z / \tau, -1/ \tau)$. Integrating with respect to $z$, we find an expression for $J_{i+1}(z/ \tau, -1/ \tau)$ up to a function that depends only on $\tau$. Plugging in $z = 0$ and using that $J_{2g}$ restricts to standard Eisenstein series, for which we know the transformation property, while $J_{2g + 1}$ restricts to $0$, we obtain the transformation law for $J_{i + 1}$.
\end{proof}

\subsection{The completion}
Define recursively functions $K_n(z,\tau)$ for $n \geq 2$ by
\begin{equation} K_n = J_n - J_1^n - \sum_{q = 2}^{n - 1} \binom{n}{q} K_q J_1^{n - q}, \label{Knrecursive} \end{equation}
where the sum is empty for $n = 2$. An explicit formula can be given by
\begin{equation} K_n = \sum_{k = 0}^{n} (-1)^{n+k} \binom{n}{k} J_k J_1^{n-k}. \label{100} \end{equation}

\begin{prop}
 $K_n$ are double-periodic in $z$ and are modular of weight $n$, that is for all $\lambda, \mu \in \BZ$
\begin{align*}
K_n(z + \lambda \tau + \mu, \tau) & = K_n(z, \tau) \\
K_n(z/\tau, -1/\tau) & = \tau^n K^n.
\end{align*}
\end{prop}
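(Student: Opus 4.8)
The plan is to substitute the two transformation laws already established into the closed formula \eqref{100} and reorganize the resulting double sum. For the elliptic transformation, Lemma \ref{102} with $n=1$ together with $J_0 = B_0 = 1$ gives $J_1(z+\lambda\tau+\mu,\tau) = J_1(z,\tau) - \lambda$, and the general case of Lemma \ref{102} gives $J_k(z+\lambda\tau+\mu,\tau) = \sum_{j=0}^{k}(-1)^{k+j}\binom{k}{j}\lambda^{k-j}J_j(z,\tau)$. Plugging both into $K_n = \sum_k (-1)^{n+k}\binom{n}{k}J_k J_1^{n-k}$, collecting the coefficient of $J_j$, using the subset-of-a-subset identity $\binom{n}{k}\binom{k}{j} = \binom{n}{j}\binom{n-j}{k-j}$ and then the binomial theorem, the inner sum over $k$ collapses to $\binom{n}{j}J_1^{n-j}$; hence $K_n(z+\lambda\tau+\mu,\tau) = K_n(z,\tau)$. (Independence of $\mu$ is in any case clear, since $p = e^{2\pi i z}$ is $1$-periodic in $z$.)

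The modular transformation runs entirely parallel. From the preceding lemma, $J_1(z/\tau,-1/\tau) = z + \tau J_1$ and $J_k(z/\tau,-1/\tau) = \sum_j \binom{k}{j}z^{k-j}\tau^j J_j$. Substituting into \eqref{100}, extracting the coefficient of $\tau^j J_j$, and applying the same binomial identity reduces the inner sum to $(-1)^{n+j}\binom{n}{j}\tau^{n-j}J_1^{n-j}$, so that $K_n(z/\tau,-1/\tau) = \tau^n K_n(z,\tau)$. A convenient way to organize and remember both computations is the observation that, formally, \eqref{100} expresses $K_n$ as the ``umbral'' $n$-th power $(X - J_1)^n$ where $X^k$ is read as $J_k$: both substitutions act on $X$ by exactly the same affine map they apply to the genuine function $J_1$ (namely $X \mapsto X - \lambda$, respectively $X \mapsto z + \tau X$), so $X - J_1$ is sent to itself, respectively scaled by $\tau$, and its $n$-th power acquires the factor $\tau^n$.

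Alternatively one can induct on $n$ via the recursion \eqref{Knrecursive}: assuming $K_2,\dots,K_{n-1}$ are already double periodic and modular of the stated weights, the lower-order terms produced by transforming $J_n$ and $J_1^n$ in \eqref{Knrecursive} cancel against the transforms of the $\binom{n}{q}K_q J_1^{n-q}$. Either way there is no genuine obstacle; the only point needing care is the bookkeeping of signs and binomial coefficients, and both meromorphy (the $K_n$ inherit from the $p/(p-1)$ and $1/w$ terms of $J_1$ only poles at the lattice points) and the validity of these manipulations as identities of meromorphic functions are immediate. This places $K_n$ in $\BV_n$.
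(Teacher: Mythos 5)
Your proof is correct, but it takes a different route from the paper's. The paper proves the proposition by induction on $n$, using the recursive definition \eqref{Knrecursive} (this is the route you only sketch at the end as an alternative); your primary argument instead substitutes the two transformation laws directly into the closed formula \eqref{100} and collapses the resulting double sum with the identity $\binom{n}{k}\binom{k}{j}=\binom{n}{j}\binom{n-j}{k-j}$ and the binomial theorem. I checked the bookkeeping: collecting the coefficient of $J_jJ_1^m$ produces a factor $(1-1)^{n-j-m}$, so only $m=n-j$ survives with coefficient $(-1)^{n+j}\binom{n}{j}$, exactly as needed, and the modular case is identical with the affine map $X\mapsto z+\tau X$ in place of $X\mapsto X-\lambda$. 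Your ``umbral'' reformulation of \eqref{100} as $(X-J_1)^n$ with $X^k\rightsquigarrow J_k$ is the real payoff of your route: it makes both transformation laws one-line consequences of the fact that $X-J_1$ is fixed (resp.\ scaled by $\tau$), and it generalizes painlessly, whereas the paper's induction requires tracking how the lower-order correction terms cancel at each stage. Two small caveats: your phrase ``extracting the coefficient of $\tau^jJ_j$'' in the modular case is garbled (the coefficient of $J_j$ is $(-1)^{n+j}\binom{n}{j}\tau^nJ_1^{n-j}$, with the full power $\tau^n$), though the computation you describe is the right one; and the closing claim that this ``places $K_n$ in $\BV_n$'' overreaches, since membership in $\BV_n$ also requires the pole order bound and the holomorphic modularity of the Laurent coefficients, which the paper establishes separately in Lemma \ref{poleKn}.
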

\begin{proof}
By induction on $n$ and a calculation using equation \eqref{Knrecursive}.
\end{proof}

\subsection{Poles}
Let
\[ D = \{ \lambda + \mu \tau\ |\ 0 \leq \lambda, \mu < 1 \} \]
be a fundamental region for $z$ with respect to a fixed $\tau$. By \eqref{004}, $J_1$ has a single pole of order $1$ at $0$ and no other pole in $D$. By \eqref{diffrelation}, $J_n$ then has for $n \geq 2$ no poles at all in $D$.\footnote{By Lemma \ref{102}, $J_n$ has poles outside of $D$.}
\begin{lemma} \label{poleKn}
For all $n \geq 2$, $K_n$ has a pole of order $n$ at $z = 0$ and no other poles in a fundamental region. Moreover, if $K_n = \sum a_k(\tau) w^k$, where $w = 2 \pi i z$, then $a_k$ are holomorphic modular forms of weight $k + n$, $a_{-1} = 0$ and $a_{-n} = (-1)^{n+1} (n-1)$. In particular,
\[ K_n(z,\tau) = \frac{(-1)^{n + 1} (n-1)}{w^n} + O(w^{-n+4}). \]
\end{lemma}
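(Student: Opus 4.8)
The plan is to read off every claim directly from the explicit formula \eqref{100} together with the already-established modular and periodic behaviour of $K_n$. Recall from the remarks preceding the lemma that in $D$ the only pole of any $J_k$ is the simple pole of $J_1$ at $z=0$, that $J_k$ is holomorphic for $k\ge 2$, that $J_0=1$ (from \eqref{000}), and that $J_1=w^{-1}+O(w)$ (from \eqref{101}). Since $K_n=\sum_{k=0}^n (-1)^{n+k}\binom{n}{k} J_k J_1^{n-k}$ is a polynomial in the $J_k$ in which negative powers of $w$ arise only through powers of $J_1$, it is immediate that $K_n$ is holomorphic on $D\setminus\{0\}$ and has at worst a pole of order $n$ at $z=0$.

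Next I would compute the leading Laurent coefficient. A $w^{-n}$ contribution from the $k$-th summand of \eqref{100} requires $J_k$ to supply a $w^{-k}$ term, and this happens only for $k=0$ (via $J_0=1$) and $k=1$ (via $J_1\sim w^{-1}$), because $J_k$ is holomorphic for $k\ge 2$. Adding the two contributions,
\[ a_{-n}=(-1)^{n}\binom{n}{0}+(-1)^{n+1}\binom{n}{1}=(-1)^{n}(1-n)=(-1)^{n+1}(n-1), \]
which is nonzero as $n\ge 2$, so the pole has order exactly $n$ and $a_{-n}$ has the asserted value. For the modularity of the remaining coefficients I would substitute $(z,\tau)\mapsto(z/\tau,-1/\tau)$ into the Laurent expansion $K_n=\sum_{k\ge -n}a_k(\tau)w^k$; since $w=2\pi i z$ this replaces $w$ by $w/\tau$, and combined with $K_n(z/\tau,-1/\tau)=\tau^n K_n(z,\tau)$ and comparison of $w^k$-coefficients it yields $a_k(-1/\tau)=\tau^{n+k}a_k(\tau)$. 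Since $K_n$ has a $q$-expansion with non-negative powers of $q$ and with coefficients holomorphic on $\BH$ (inherited from the explicit $q$-expansions of the $J_m$ in \eqref{000} and \eqref{101}), each $a_k$ is periodic, holomorphic on $\BH$, and holomorphic at the cusp; hence $a_k$ is a holomorphic modular form of weight $n+k$, vanishing when $n+k<0$. Because $M_1=M_2=M_3=0$, the coefficients $a_{-n+1},a_{-n+2},a_{-n+3}$ all vanish, which together with the value of $a_{-n}$ is exactly the final assertion $K_n=\frac{(-1)^{n+1}(n-1)}{w^n}+O(w^{-n+4})$.

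It remains to prove $a_{-1}=0$. A weight argument does not suffice, since for odd $n$ with $n-1\ge 4$ the space $M_{n-1}$ is nonzero; instead I would invoke the residue theorem for elliptic functions. As $K_n$ is doubly periodic in $z$ and its only pole in a fundamental parallelogram is at $z=0$, the sum of its residues vanishes, so $\operatorname{Res}_{z=0}K_n=(2\pi i)^{-1}a_{-1}=0$, whence $a_{-1}=0$.

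I do not expect a single hard obstacle. The points needing care are the bookkeeping in the leading-coefficient step (verifying that only the $k=0$ and $k=1$ summands of \eqref{100} reach order $w^{-n}$), the routine check that the $a_k$ are holomorphic on $\BH$ and at the cusp, and the observation that $a_{-1}=0$ must be obtained from the residue theorem rather than from a weight argument. Everything else is a direct consequence of the pole structure of $J_1$, formula \eqref{100}, the modular transformation of $K_n$, and the vanishing of $M_1,M_2,M_3$.
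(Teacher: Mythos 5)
Your proof is correct and follows essentially the same route as the paper's (much terser) argument: pole analysis of the $J_k$ applied to \eqref{100}, the leading coefficient from the expansion of $J_1$, holomorphicity of the $a_k$ from the Fourier expansions, and $a_{-1}=0$ from the vanishing of the total residue of an elliptic function with a single pole. You additionally spell out the weight computation $a_k\in M_{n+k}$ and the vanishing of $M_1,M_2,M_3$ needed for the final ``$O(w^{-n+4})$'' claim, which the paper leaves implicit.
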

\begin{proof}
The first part is by \eqref{100} and the analysis of the poles of $J_n$. The statement on holomorphicity of $a_k(\tau)$ follows from the Fourier expansion of the $J_n$ for a fixed $z \neq 0$. $a_{-n}$ follows from the expansion \eqref{101} and $a_{-1} = 1$, since an elliptic function with a single pole has no residuum.
\end{proof}

Let $\BV$ be the $\BC$-vector space spanned by all meromorphic Jacobi forms\footnote{that is, a meromorphic function that satisfies the elliptic and modular transformation equation} $f : \BC \times \BH \ra \BC \cup \{ \infty \}$ of index $0$ and some weight, with only pole in the fundamental region at $0$ and a Laurent series at $0$ with coefficients holomorphic modular forms in $\tau$. By Lemma \ref{poleKn}, a basis of $\BV$ as a module over the ring of holomorphic modular forms $M_{\ast}$ is given by the $K_n$. We will use later the natural filtration
\[ \BV_n = \{ f \in \BV\ |\ f \text{ has a pole of order } \leq n \text{ at } 0 \}. \]

\subsection{Relations}
Two meromorphic Jacobi forms of index $0$ with the same principal part at their singularities are equal up to a function of $\tau$. Since for $n \leq 5$ we only have a single negative term in the Taylor expansion of $K_n$, we easily obtain relations among products and derivatives of the $K_i$ for low $i$. Moreover, using \eqref{100} and induction on $n$, we see that we can rewrite any derivative of $J_n$ in the form of products of $J_i$ for $i \leq n + 1$. We give the first few examples.

\begin{example}
\begin{align*}
K_2 & = J_1^{\bullet} - \frac{1}{12} E_2(\tau) = - \wp(z,\tau) \\
J_2^{\bullet} & = J_3 - J_1 J_2 + \frac{1}{6} E_2 J_1 \\
J_3^{\bullet} & = J_4 - J_3 J_1 + \frac{1}{4} J_2 E_2 - \frac{1}{120} E_4 \\
K_2 \cdot K_2 & = - \frac{1}{3} K_4 + \frac{1}{60} E_4,
\end{align*}
where we used \eqref{004} in the second equality of the first line.
\end{example}

\section{Differential operators}
\subsection{Reduction to $\BV$}
Let $\phi = \phi_{-2,1}$ be as in \eqref{005}. As $\phi(z,\tau) = \theta_1(z,\tau)^2 / \theta_1^{\bullet}(0,\tau)^2$ (see e.g. \cite{DMZ}), $\phi$ has a single zero at $0$ of order $2$ in the fundamental region. \\

Let $F$ be a (weak) Jacobi form of index $m$ and weight $k$ and consider $F/\phi^m$. $F/ \phi^m$ is a meromorphic Jacobi form of index $0$, weight $2k + m$ and has a single pole of order $\leq 2m$ at $0$ in the fundamental region. Moreover, since the coefficients of a Taylor expansion of $F$ are quasi-modular forms \cite{DMZ}, $F / \phi^m \in \BV_{2m}$. It is then easy to prove the following.

\begin{lemma} Let $\widetilde{\CJ}_{\ast, m}$ be the space of weak Jacobi forms of index $m$. The map
\begin{equation} \widetilde{\CJ}_{\ast,m} \ra \BV_{2m}, \quad F \mapsto F/\phi^m \label{200} \end{equation}
is an isomorphism.
\end{lemma}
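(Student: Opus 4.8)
The plan is to produce the inverse map by hand: it must be $g \mapsto g\,\phi^m$. Since \eqref{200} is well defined by the discussion preceding the lemma, it remains to show that $F \mapsto F/\phi^m$ is injective and that $g\,\phi^m$ is a weak Jacobi form of index $m$ for every $g \in \BV_{2m}$. Injectivity is immediate: $\phi \not\equiv 0$, so $F/\phi^m = 0$ forces $F = 0$.

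For surjectivity, fix $g \in \BV_{2m}$ of weight $N$ and set $F = g\,\phi^m$. The transformation behaviour is purely formal: $\phi$ has index $1$ and weight $-2$ and $g$ has index $0$, so multiplying the elliptic and modular transformation factors shows $F$ obeys the transformation equations of a Jacobi form of index $m$ and weight $N - 2m$. For holomorphy in $z$, recall from $\phi = \theta_1(z,\tau)^2/\theta_1^{\bullet}(0,\tau)^2$ that $\phi$ vanishes to order exactly $2$ at $z = 0$, hence $\phi^m$ to order $2m$, whereas $g \in \BV_{2m}$ has at worst a pole of order $2m$ at $z = 0$ and no other pole in the fundamental region; thus $F$ is holomorphic at $z = 0$ and, by the double periodicity just established, on all of $\BC \times \BH$.

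The one genuine point — and what I expect to be the main obstacle — is holomorphy at the cusp in the weak sense, i.e. that no negative powers of $q$ occur in the Fourier expansion of $F$; this requires reconciling the "Laurent series in $z$ with holomorphic modular form coefficients" description of $\BV$ with the "non-negative $q$-powers in the Fourier expansion in $p$" definition of weak Jacobi forms. I would argue it through the module structure: by Lemma \ref{poleKn} the space $\BV_{2m}$ is spanned over $M_\ast$ by $K_0 = 1, K_2, \dots, K_{2m}$, and since $g \mapsto g\,\phi^m$ is $M_\ast$-linear it suffices to treat $g = K_n$ with $0 \leq n \leq 2m$. For $n = 0$ this is $\phi^m$ itself. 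For $2 \leq n \leq 2m$, the explicit expansion \eqref{000} shows that in the Fourier expansion of $J_k$ — hence, via \eqref{100}, of $K_n$ — valid on an annulus $|q| < |p| < 1$, only non-negative powers of $q$ occur; multiplying by the Fourier expansion of the weak Jacobi form $\phi^m$ and using that the product $K_n\phi^m$ is entire in $z$ (so this really is its Fourier expansion) gives $c(n,r) = 0$ for $n < 0$, and together with the transformation law this exhibits $K_n\phi^m$, hence $g\,\phi^m$, as a weak Jacobi form of index $m$. (Alternatively one can invoke the standard fact that a holomorphic function on $\BC\times\BH$ satisfying the index-$m$, weight-$k$ transformation equations with holomorphic Fourier coefficients $c_r(\tau)$ is a weak Jacobi form as soon as its growth as $\mathrm{Im}\,\tau \to \infty$ is controlled, the index-$m$ elliptic relation tying the $c_r$ together.)
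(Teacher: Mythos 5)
Your proof is correct. Note that the paper itself offers no argument here beyond the sentence ``It is then easy to prove the following,'' so there is no official proof to compare against; your write-up is a legitimate completion of what the author left implicit. The two points that genuinely need checking are exactly the ones you isolate: that $g\,\phi^m$ is holomorphic in $z$ (the order-$2m$ zero of $\phi^m$ at $z=0$ cancelling the pole of $g \in \BV_{2m}$, with double periodicity handling the rest of $\BC$), and the cusp condition, which you correctly reduce via $M_\ast$-linearity to the basis elements $1, K_2, \dots, K_{2m}$ of Lemma \ref{poleKn} and then settle from the $q$-expansion \eqref{000} together with the fact that the product expansion on the annulus $|q|<|p|<1$ really is the Fourier expansion of the entire function $K_n\phi^m$. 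One small point worth making explicit if you write this up: that an arbitrary $g \in \BV_{2m}$ is an $M_\ast$-combination of $K_n$ with $n \leq 2m$ \emph{only} (subtract off the principal part using the $K_n$, $2 \leq n \leq 2m$, note $a_{-1}=0$ by the residue theorem, and observe that what remains is a holomorphic doubly periodic function of $z$, hence a modular form); this is implicit in the paper's claim that the $K_n$ form a basis of $\BV$ but is the step that confines you to $n \leq 2m$.
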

We will use this lemma, to transform statements on (weak) Jacobi forms to $\BV_{2m}$.

\subsection{Operators on $\BV_{2m}$}
Define the three operators,
\begin{alignat*}{3}
\bullet & \text{ Multiplication by } K_i :\quad \quad  &    K_i \cdot : & \BV_n \ra \BV_{n+i},  \quad   & f & \mapsto K_i \cdot f \\
\bullet & \text{ Differentiation by } z:    &          D_z : & \BV_n \to \BV_{n + 1},   & f & \mapsto f^\bullet \\
\bullet & \text{ Differentiation by } \tau: &       D_\tau : & \BV_n \to \BV_{n + 2},   & f & \mapsto f' - J_1 f^{\bullet} - \frac{k}{12} E_2 f.
\end{alignat*}

For every operator $T$ of this form, we obtain via
\begin{equation} \widetilde{T} : F \mapsto \phi^m T( F/\phi^m ) \label{201} \end{equation}
an operator on meromorphic Jacobi forms of fixed index $m$. In general $\widetilde{T}$ will introduce poles to holomorphic Jacobi forms, namely $K_i, D_z, D_\tau$ give rise to to poles of order $i, 1, 2$ respectively. By using appropriate linear combinations of these operators, one can cancel the appearing poles and obtain operators defined on holomorphic Jacobi forms. We illustrate the method in degree $2$. \\

\emph{Case degree 2.} Consider the operators of degree $2$, $D_{\tau}, D_z^2$ and multiplication by $K_2$, obtained from the list above. The action on monomials $1/w^n$ and $1/w^{n-1}$ (with $w = 2 \pi i z$) is given by
\begin{alignat*}{2}
 D_\tau( \frac{1}{w^n} ) & = \frac{n}{w^{n + 2}} + O(w^{-n}) &
 D_\tau( \frac{1}{w^{n-1}} ) &= (n-1) \frac{1}{w^{n+1}} + O(w^{n-1}) \\
 D_z^2( \frac{1}{w^n} ) & = n (n + 1) \frac{1}{w^{n+2}} + O(w^{-n}) \quad & \quad
 D_z^2( \frac{1}{w^{n-1}} ) &= n (n-1) \frac{1}{w^{n+1}} + O(w^{n-1}) \\
 K_2 \cdot \frac{1}{w^n} & = \frac{-1}{w^{n + 2}} + O(w^{-n}) &
 K_2 \cdot \frac{1}{w^{n-1}} &= - \frac{1}{w^{n+1}} + O(w^{n-1}).
\end{alignat*}

One finds that
\[ D_H = 2n D_\tau - D_z^2 + n (n-1) K_2 \]
is the unique linear combination (up to scalar), that sends $\BV_n$ to $\BV_n$; by \eqref{201}, $D_H$ introduces then a differential operator
\begin{equation} D_H : \widetilde{J}_{\ast, m} \ra \widetilde{J}_{\ast + 2,m}. \label{202} \end{equation}
This is the classical Heat operator as found in \cite{EZ}, \cite{DMZ}, \cite{GK}. \\

Consider now the space $\widetilde{J}_{2 \ast, m}$ of even-weight weak Jacobi forms. Under \eqref{200}, $\widetilde{J}_{2 \ast,m}$ is isomorphic to the space $\BV_{2m}^{\text{even}}$ of even functions in $\BV_{2m}$. Therefore, to find an operator $\widetilde{J}_{2 \ast, m} \ra \widetilde{J}_{2 \ast, m}$ of degree $2$, we only need to consider the action of our 3 operators on the single monomial $1/(w^{2m})$, and not on $1/w^{2m-1}$. We obtain a second independent operator
\[ T_{\tau} = D_\tau + n K_2, \]
that, by \eqref{201} again, defines an operator on weak Jacobi forms,
\begin{equation} T_{\tau} : \widetilde{J}_{2 \ast, m} \ra \widetilde{J}_{2 \ast + 2, m}. \label{203} \end{equation}

It is known, that $D_H$ preserves not only weak, but also (full) Jacobi forms. We check the same for $T_{\tau}$.
\begin{prop} $T_{\tau}$ defines an operator $\CJ_{2k,m} \ra \CJ_{2k+2,m}$. \end{prop}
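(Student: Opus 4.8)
Since $\widetilde{T_\tau}\colon F\mapsto \phi^m\,T_\tau(F/\phi^m)$ already sends weak even-weight Jacobi forms to weak Jacobi forms (this is what \eqref{203} records, via \eqref{201}), the only thing to prove is that it preserves holomorphicity at the cusp. I would reduce this to a statement about the $q^0$-Fourier coefficient: recall that a weak Jacobi form of index $m$ is a (holomorphic) Jacobi form exactly when its coefficients $c(n,r)$ vanish for $r^2>4nm$, and that for a weak form this condition is equivalent, by the structure theory of weak Jacobi forms, to the $q^0$-coefficient $\sum_r c(0,r)\,p^r$ being independent of $z$ (see \cite{EZ}). So for $F\in\CJ_{2k,m}$ the claim reduces to checking that the coefficient of $q^0$ in $\widetilde{T_\tau}F$ is constant in $z$.

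The plan is then to write $\widetilde{T_\tau}F$ out explicitly. Put $f=F/\phi^m$ and expand $\phi^m T_\tau(f)=\phi^m\big(f'-J_1f^\bullet-\tfrac{\kappa}{12}E_2 f+2mK_2 f\big)$ with the product rule; using $\phi=\theta_1^2/\theta_1^\bullet(0,\tau)^2$ together with the logarithmic derivative \eqref{004}, which give $\phi^\bullet/\phi=2J_1$, this collapses to
\[
\widetilde{T_\tau}F \;=\; F'-m\,\tfrac{\phi'}{\phi}\,F-J_1F^\bullet+2m\,J_1^2F-\tfrac{\kappa}{12}\,E_2F+2m\,K_2F ,
\]
where $\kappa=2k+2m$ is the weight of $f$. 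Now I would pass to $q^0$-coefficients. Because $F$ is a holomorphic Jacobi form its $q^0$-coefficient is the constant $c(0,0)$, so the $q^0$-coefficients of $F'$ and of $F^\bullet$ vanish; also $\phi|_{q^0}$ is independent of $\tau$, so $(\phi'/\phi)|_{q^0}=0$, while $E_2|_{q^0}=1$. Finally \eqref{000} gives $J_1|_{q^0}=\tfrac{p+1}{2(p-1)}=\tfrac12\coth(w/2)$ with $w=2\pi i z$, and therefore $K_2|_{q^0}=(J_1|_{q^0})^\bullet-\tfrac1{12}=-\tfrac14\sinh^{-2}(w/2)-\tfrac1{12}$. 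Substituting, the $q^0$-coefficient of $\widetilde{T_\tau}F$ equals
\[
c(0,0)\Big(\tfrac{m}{2}\coth^2(w/2)-\tfrac{m}{2}\sinh^{-2}(w/2)-\tfrac{m}{6}-\tfrac{\kappa}{12}\Big).
\]

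The decisive — and, I expect, only delicate — point is that the $z$-dependence here cancels completely, by the elementary identity $\coth^2u-\sinh^{-2}u=1$, leaving the constant $c(0,0)\big(\tfrac{m}{2}-\tfrac{m}{6}-\tfrac{\kappa}{12}\big)=\tfrac{m-k}{6}\,c(0,0)$. By the criterion above this shows $\widetilde{T_\tau}F\in\CJ_{2k+2,m}$, as claimed. (A useful check: when $m=0$ the operator $\widetilde{T_\tau}$ is just the classical Serre derivative $\partial^S$ on $M_{2k}$, and indeed $\tfrac{m-k}{6}c(0,0)=-\tfrac{k}{6}c(0,0)$ is the $q^0$-coefficient of $\partial^S F$.) The one bookkeeping subtlety to flag in the write-up is the term $m\,\tfrac{\phi'}{\phi}\,F$: here $\phi'/\phi$ is not among the $J_n$, but it has no $q^0$-part and hence disappears before the cancellation step, so it plays no role in the final identity.
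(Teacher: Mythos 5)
Your explicit expansion of $\widetilde{T_\tau}F=\phi^m T_\tau(F/\phi^m)$ and the computation of its $q^0$-coefficient are correct (I checked the bookkeeping: $\phi^\bullet/\phi=2J_1$, $J_1|_{q^0}=\tfrac12\coth(w/2)$, $K_2|_{q^0}=-\tfrac14\sinh^{-2}(w/2)-\tfrac1{12}$, and the cancellation via $\coth^2u-\sinh^{-2}u=1$ all go through, as does the $m=0$ sanity check). This part of your argument is essentially an elegant Fourier-theoretic version of the paper's ``Case B'' ($\alpha,\beta\in\BZ$). The gap is in the reduction step. The asserted criterion --- that a weak Jacobi form of index $m$ is a (holomorphic) Jacobi form if and only if its $q^0$-coefficient is independent of $z$ --- is not a correct general statement. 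The elliptic transformation law forces $c(n,r)$ to depend only on $4nm-r^2$ and on $r\bmod 2m$, and your criterion controls exactly the polar classes represented at $n=0$, i.e.\ those of the form $(-\rho_0^2,\rho)$ with $0<|\rho_0|\le m$. But for $m\ge 5$ there are polar classes with no representative at $n=0$: for instance, at index $m=5$ the coefficient $c(1,5)$ has $4nm-r^2=-5$, and no pair $(0,r')$ with $r'\equiv 5\pmod{10}$ satisfies $r'^2=5$, so $c(1,5)$ is not tied to any $c(0,r')$ and can be nonzero for a weak form whose $q^0$-coefficient is constant. Since the proposition is claimed for all $m\ge 0$, your proof as written only covers $m\le 4$ (which does include the index-$1$ applications, but not the general statement).

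What is missing is precisely the content of the paper's ``Case A'': one must control the behaviour of $T_\tau(F)$ at \emph{all} rational torsion points, i.e.\ show that $q^{m\alpha^2}\,T_\tau(F)(\alpha\tau+\beta,\tau)$ stays bounded as $\tau\to i\infty$ for every $\alpha,\beta\in\BQ$, not just for $\alpha,\beta\in\BZ$. The paper does this by observing that for non-integral $(\alpha,\beta)$ the functions $J_i(\alpha\tau+\beta,\tau)$ are bounded as $\tau\to i\infty$ and that $\partial_\tau$ does not decrease the order of vanishing, so $T_\tau$ cannot worsen the decay of $\widetilde F=F/\phi^m$ at these points. To repair your argument you would either need to import that analytic estimate, or replace your $q^0$-criterion by the full polar-coefficient criterion and check the finitely many additional classes $C(D,\rho)$ with $D>0$ not visible at $n=0$ --- which your single computation at $q^0$ does not address.
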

\begin{proof}
 
Let $F$ be an even weight Jacobi form of weight $2k$ and index $m$ and let $\widetilde{F} = F/\phi^m$. From before, we deduce that $T_\tau F$ is a holomorphic function and satisfies the elliptic and modular transformation equations. We need to show that $T_\tau F$ has a Fourier expansion of the form
\[ \sum_{n \geq 0} \sum_{\substack{r \in \BZ \\ r^2 \leq 4 n m}} c(n,r) p^r q^n. \]
Equivalently, see \cite{DMZ}, we need to show that $\forall \alpha, \beta \in \BQ$, 
\[ q^{m \alpha^2} T_\tau(F) (\alpha \tau + \beta, \tau) \]
is bounded for $\tau \rightarrow \infty$. We split this into two cases.

\textit{Case A.} Assume $\alpha \in \BQ \diagdown \BZ$ or $\beta \in \BQ \diagdown \BZ$. Then $\theta_1(\alpha \tau + \beta, \tau) \neq 0$ and $\widetilde{F}(\alpha \tau + \beta, \tau)$ is a well defined function of $\BH$. As $\theta_1(\alpha \tau + \beta, \tau)$ is a modular form, it vanishes to a fixed order at $\tau = \infty$ and hence so does $\widetilde{F}$. When applying $T_{\tau}$ to $\widetilde{F}$, we take derivatives with respect to $\tau$ and multiply with functions of the form $J_i$. The first does at most increase the order of convergence at $\tau = \infty$. To see that this is true also for the second, note two things: a) we may restrict to $0 \leq \alpha, \beta < 1$ (with $\alpha, \beta = 0$ is excluded) and b) by the Fourier expansion of $J_i$, $J_i(\alpha \tau + \beta,\tau)$ is bounded for $\tau \rightarrow \infty$. Therefore, $T_{\tau}(\widetilde{F})$ converges not worse then $\widetilde{F}$ for $\tau \mapsto \infty$. Applying $\phi^m$, the claim follows. \\

\textit{Case B.} Assume $\alpha, \beta \in \BZ$. Then we can equally well assume $\alpha = \beta = 0$ and we need to show that $T_q(F)(0, \tau)$ is bounded for $\tau \rightarrow \infty$.
Let $F = F_0 + w^2 F_2 + O(w^4)$, with $F_0, F_2$ quasi modular forms. Then
\[ (T_q F)(0, \tau) = F_0' + \Big( \frac{m}{6} - \frac{k}{12} \Big) E_2 F_0 - 2 F_2 \]
which is bounded for $\tau \rightarrow \infty$.
\end{proof}

\begin{proof}[Proof of Theorem \ref{MainThm}]
Define
\[ \partial^J = \frac{1}{1 - 4m} (T_{\tau} - D_H). \]
By the previous proposition, $\partial^J$ is an operator on Jacobi forms, $\CJ_{2k,m} \ra \CJ_{2k+2,m}$. The claims of the Theorems follow now from direct calculations.
\end{proof}

\begin{rmk}
The case of higher degree works completely analog; see \cite{GK} for a list of operators on all Jacobi forms. With the above method, one can find additional operators defined only on even or odd Jacobi forms.
\end{rmk}

\subsection{Ramanujan's equations}
Let $E_{2,1}(z,\tau)$ be defined as in Definition \ref{006}. The following is derived by straightforward means.
\begin{lemma} \label{204}
$E_{2,1}$ satisfies the following properties:
\begin{enumerate*}
 \item[(a)] holomorphic on $\BC \times \mathbb{H}$
 \item[(b)] has a Fourier expansion $E_{2,1}(z, \tau) = \sum_{n \geq 0} \sum_{\substack{r \in \BZ \\ r^2 \leq 4n}} c(n,r) p^r q^n $. In particular $c(n,r) = 0$ for $4n - r^2 < 0$.
 \item[(c)] satisfies the elliptic transformation equation, while the modular equation reads
\[ E_{2,1}(z/\tau, -1/\tau) = e^{\frac{2 \pi i z^2}{\tau}} \tau^2 E_{2,1} + \frac{1}{2 \pi i} e^{\frac{2 \pi i z^2}{\tau}} \tau \phi_{0,1} \]
 \item[(d)] $E_{2,1}(0, \tau) = E_2(\tau)$.
\end{enumerate*}
\end{lemma}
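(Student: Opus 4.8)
The plan is to first replace the ad hoc definition of $E_{2,1}$ by a closed formula in the standard generators $\phi_{-2,1},\phi_{0,1}$, and then to read off (a), (c) and (d) directly, leaving (b) as the only point of real substance.

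First I would record the local structure at the lattice points: for fixed $\tau$ the only pole of $\wp(\,\cdot\,,\tau)$ in $\BC$ is a pole of order exactly $2$ at each $\lambda + \mu\tau$ with $\lambda,\mu\in\BZ$, while $\phi = \phi_{-2,1} = \theta_1^2/\theta_1^\bullet(0,\tau)^2$ vanishes there to order exactly $2$. Hence $\wp\phi$ is holomorphic on $\BC\times\BH$, which with the holomorphy of $E_2,E_4$ on $\BH$ gives (a). Moreover $\wp\phi$ is a holomorphic function obeying the index-$1$, weight-$0$ elliptic and modular transformation laws — because $\wp = -K_2$ transforms with weight $2$ and index $0$ and $\phi$ with weight $-2$ and index $1$ — and its $q$-expansion has only non-negative powers of $q$; thus $\wp\phi$ is a weak Jacobi form of weight $0$ and index $1$. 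By \cite{EZ}, Thm 9.3, that space is one-dimensional, spanned by $\phi_{0,1}$, and comparing one Fourier coefficient (using $\wp = w^{-2} + O(w^2)$, $\phi = w^2 + O(w^4)$ with $w = 2\pi i z$, and the $q^0$-terms $\phi_{-2,1} = p - 2 + p^{-1} + O(q)$, $\phi_{0,1} = p + 10 + p^{-1} + O(q)$) pins down the constant:
\[ \wp(z,\tau)\,\phi_{-2,1}(z,\tau) = \tfrac{1}{12}\,\phi_{0,1}(z,\tau), \qquad\text{so}\qquad E_{2,1} = \tfrac{1}{12}\bigl(E_2\,\phi_{0,1} - E_4\,\phi_{-2,1}\bigr). \]

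From this, (c) and (d) are routine. For (d), as $z\to 0$ we have $\phi\to 0$ while $\phi\wp\to 1$, so $E_{2,1}(0,\tau) = E_2(\tau)$. For (c), the elliptic equation is immediate since $\phi_{0,1},\phi_{-2,1}$ both satisfy the index-$1$ elliptic law and $E_2,E_4$ are independent of $z$; for the modular equation I would substitute $E_4(-1/\tau) = \tau^4 E_4$, $\phi_{0,1}(z/\tau,-1/\tau) = e^{2\pi i z^2/\tau}\phi_{0,1}$, $\phi_{-2,1}(z/\tau,-1/\tau) = \tau^{-2} e^{2\pi i z^2/\tau}\phi_{-2,1}$ and $E_2(-1/\tau) = \tau^2 E_2 + \tfrac{12}{2\pi i}\tau$ into $E_{2,1} = \tfrac{1}{12}(E_2\phi_{0,1} - E_4\phi_{-2,1})$ and collect terms; the anomaly $\tfrac{12}{2\pi i}\tau$ of $E_2$ multiplies $\tfrac{1}{12}\phi_{0,1}$ to give exactly the claimed term $\tfrac{1}{2\pi i}\,e^{2\pi i z^2/\tau}\,\tau\,\phi_{0,1}$.

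The hard part is (b). I would pass to the theta decompositions $\phi_{0,1} = \sum_{\mu\in\{0,1\}} h_\mu^{+}\,\theta_{1,\mu}$ and $\phi_{-2,1} = \sum_{\mu\in\{0,1\}} h_\mu^{-}\,\theta_{1,\mu}$, where $\theta_{1,\mu}$ are the index-$1$ theta functions and $h_\mu^{\pm}$ are \emph{weakly} holomorphic modular forms of weight $-\tfrac12$, resp. $-\tfrac52$, each allowed a pole at the cusp. Then
\[ E_{2,1} = \tfrac{1}{12}\sum_{\mu\in\{0,1\}}\bigl(E_2\,h_\mu^{+} - E_4\,h_\mu^{-}\bigr)\,\theta_{1,\mu}, \]
and, as $\theta_{1,\mu}$ is the theta series of the lattice, statement (b) is equivalent to each coefficient $E_2\,h_\mu^{+} - E_4\,h_\mu^{-}$ being an \emph{honest} holomorphic modular form of weight $\tfrac32$. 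Since $E_2,E_4$ are holomorphic at the cusp with value $1$, this says exactly that the principal parts of $h_\mu^{+}$ and $h_\mu^{-}$ at the cusp agree. I would check this by writing the $h_\mu^{\pm}$ as explicit $\eta$-quotients, obtained from $\theta_1^2 = \theta_1^\bullet(0,\tau)^2\,\phi_{-2,1}$ and the classical expansion of $\phi_{0,1}$, after which the coincidence of principal parts is a one-line verification; equivalently, one checks directly from the $q$-expansions that the finitely many Fourier coefficients $c(n,r)$ of $E_{2,1}$ with $4n - r^2 < 0$ not already forced to vanish by the theta decomposition are indeed zero. This is the main obstacle: neither $E_2\,\phi_{0,1}$ nor $E_4\,\phi_{-2,1}$ has, by itself, the Fourier support demanded in (b) — both are genuinely weak — so the statement hinges on the cancellation of the cusp-polar parts and will not follow from a formal product argument.
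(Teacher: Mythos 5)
Your proof is correct, and in fact supplies the details that the paper omits entirely (its ``proof'' of Lemma \ref{204} is the single phrase ``derived by straightforward means'', together with a table of Fourier coefficients). The pivot of your argument, the identity $\wp\,\phi_{-2,1}=\tfrac{1}{12}\phi_{0,1}$ and hence $E_{2,1}=\tfrac{1}{12}(E_2\phi_{0,1}-E_4\phi_{-2,1})$, is the right move: it is correctly justified by one-dimensionality of the space of weight-$0$, index-$1$ weak Jacobi forms plus a normalization check, and (a), (c), (d) then follow exactly as you say (the $E_2$-anomaly $\tfrac{12\tau}{2\pi i}$ times $\tfrac{1}{12}\phi_{0,1}$ reproduces the stated correction term). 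For (b) your theta-decomposition argument works, but it can be streamlined: the elliptic transformation law alone (which $E_{2,1}$ satisfies by (c)) forces $c(n,r)$ to depend only on $4n-r^2$ and $r\bmod 2$, and for an index-$1$ form with $q$-expansion supported on $n\ge 0$ the only achievable negative discriminant is $4n-r^2=-1$; so (b) reduces to the single check $c(0,\pm 1)=0$, which is immediate from the $q^0$-terms $\phi_{0,1}=p+10+p^{-1}+O(q)$, $\phi_{-2,1}=p-2+p^{-1}+O(q)$ and $E_2=E_4=1+O(q)$ that you already wrote down. This is exactly the cancellation of cusp-polar parts you describe via the $h_\mu^\pm$, just without needing the $\eta$-quotient expressions; either way the argument is sound and complete.
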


The first Fourier coefficients $c(n,r)$ of $E_{2,1}$ are given by
\begin{table}[ht]
\centering
\begin{tabular}{c | c c c c c c c c c}
\hline
\diagbox[width=1.0cm,height=0.7cm]{$n$}{$r$}  & $-4$ & $-3$ & $-2$ & $-1$ & $0$ & $1$ & $2$ & $3$ & $4$ \\[0.5ex]
\hline
$0$ & $0$ & $0$     & $0$      & $0$     & $1$      & $0$      & $0$    & $0$     & $0$ \\
$1$ & $0$ & $0$     & $1$      & $-28$   & $30$     & $-28$    & $1$    & $0$     & $0$ \\
$2$ & $0$ & $0$     & $30$     & $-264$  & $396$    & $-264$   & $30$   & $0$     & $0$ \\
$3$ & $0$ & $-28$   & $396$    & $-1620$ & $2408$   & $-1620$  & $396$  & $-28$   & $0$ \\
$4$ & $1$ & $-264$  & $2408$   & $-7944$ & $11430$  & $-7944$  & $2408$ & $-264$  & $1$
\end{tabular}
\end{table}

\begin{proof}[Proof of Corollary \ref{MainCor}] A direct check. \end{proof}

\section{The relation to theta functions}
\subsection{Proof of Theorem \ref{ThetaThm}}
Define functions $\widetilde{h}_n(\tau)$ by
\begin{equation} \frac{1}{\theta_1(z,\tau)} = \frac{1}{w} \sum_{n \geq 0} \widetilde{h}_n(\tau) w^{n}, \label{301} \end{equation}
where as before $w = 2 \pi i z$ and let
\[ h_n := n! \cdot \widetilde{h}_n(\tau) \theta_1^{\bullet}(0,\tau). \]
Here $0! = 1$ and $h_0 = 1$. As $\theta_1(z,\tau)$ is odd, $h_n = 0$ for all odd $n$.

For $n \geq 0$, set
\[ F_n(z, \tau) = \frac{1}{\theta_1} \Big( \sum_{k = 0}^n \binom{n}{k} h_{n-k} \theta_1^{k\bullet} \Big), \]
where we let $\theta_i^{k\bullet}$ (resp. $\theta_i^{k'}$) be the $k$'th derivative of $\theta_i$ with respect to $z$ (resp. $\tau$).

\begin{thm}\label{FnJn} $F_n = J_n$ for all $n \geq 0$. \end{thm}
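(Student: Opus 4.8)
The plan is to recognise that $\sum_{n\geq 0}\tfrac{F_n}{n!}x^n$ is a generating function which collapses to the theta quotient of Theorem~\ref{ThetaThm}, and then to identify that quotient with $\curly{J}$ by a short rigidity argument. First I would pass to exponential generating functions. Since $\partial_z=\partial_w$ for $w=2\pi i z$, Taylor's theorem gives $\sum_{k\geq 0}\frac{\theta_1^{k\bullet}(z,\tau)}{k!}x^k=\theta_1\!\big(z+\tfrac{x}{2\pi i},\tau\big)=\exp(x\partial_z)\theta_1$, while substituting $w=x$ in \eqref{301} gives $\sum_{n\geq 0}\widetilde{h}_n(\tau)x^n=x/\theta_1(\tfrac{x}{2\pi i},\tau)$, hence $\sum_{j\geq 0}\frac{h_j}{j!}x^j=\theta_1^{\bullet}(0,\tau)\cdot x/\theta_1(\tfrac{x}{2\pi i},\tau)$. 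Because $F_n=\theta_1^{-1}\sum_{k=0}^n\binom nk h_{n-k}\theta_1^{k\bullet}$ is, up to the factor $\theta_1^{-1}$, the binomial convolution of these two sequences, multiplying the two displays and dividing by $\theta_1(z,\tau)$ yields
\[
\sum_{n\geq 0}\frac{F_n}{n!}x^n=x\,\theta_1^{\bullet}(0,\tau)\,\frac{\exp(x\partial_z)\,\theta_1(z,\tau)}{\theta_1(\tfrac{x}{2\pi i},\tau)\,\theta_1(z,\tau)}=:\Phi(z,x,\tau).
\]
Thus $F_n=J_n$ for all $n$ is equivalent to the identity $\curly{J}=\Phi$ --- which is exactly Theorem~\ref{ThetaThm} --- so it suffices to prove this single identity.

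To prove $\curly{J}=\Phi$ I would run a Liouville-type argument on $D:=\curly{J}-\Phi=\sum_{n\geq 0}D_n(z,\tau)x^n$, using three observations. (i) \emph{$D$ is entire in $z$}: by \eqref{101} the only pole of $\curly{J}$ near $z=0$ is the contribution $x/w$ from $J_1$, and $\Phi$ has there a simple pole with $\lim_{w\to 0}w\,\Phi=x$ too; both $\curly{J}$ and $\Phi$ are $1$-periodic and quasi-periodic in $z$, so their poles sit only at lattice points and have matching residues there, and cancel in $D$. (ii) \emph{$\curly{J}$ and $\Phi$ satisfy the same quasi-periodicity} $f(z+1,\tau)=f(z,\tau)$, $f(z+\tau,\tau)=e^{-x}f(z,\tau)$: for $\curly{J}$ this is Lemma~\ref{102} with $(\lambda,\mu)=(0,1)$ and $(1,0)$ after summing over $n$ with the binomial theorem, and for $\Phi$ it follows from $\theta_1(z+1)=-\theta_1(z)$, $\theta_1(z+\tau)=-e^{-i\pi\tau-2\pi iz}\theta_1(z)$, the multiplier $e^{-x}$ arising from the extra shift in the numerator. (iii) \emph{$D$ vanishes at $z=0$}: the constant term of $\curly{J}$ in its $w$-expansion is $\sum_{k\geq 0}\frac{B_{2k}E_{2k}}{(2k)!}x^{2k}$ since $J_{2k}(0,\tau)=B_{2k}E_{2k}$, $J_{2k+1}(0,\tau)=0$, and $J_1$ has no constant term by \eqref{101}; while the constant term of $\Phi$ at $z=0$ equals $x$ times $\theta_1^{\bullet}/\theta_1$ evaluated at the argument $\tfrac{x}{2\pi i}$, which by \eqref{004} and \eqref{101} is the very same series. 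These combine by induction on the $x$-degree: if $D_j\equiv 0$ for $j<n$, then (ii) forces $D_n(z+\tau,\tau)=D_n(z,\tau)$, so $D_n$ --- entire in $z$ by (i), doubly periodic, hence constant --- equals $D_n(0,\tau)=0$ by (iii); thus $D\equiv 0$, whence $F_n=J_n$ (and Theorem~\ref{ThetaThm} follows as well).

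Apart from the classical transformation law and local expansion of $\theta_1$ at the origin, everything above is formal bookkeeping with binomial convolutions and induction on the degree in $x$, so I expect step (iii) to be where care is actually required --- specifically, keeping the powers of $2\pi i$ straight when comparing ``$J_1$ evaluated at $\tfrac{x}{2\pi i}$'' with the $w$-expansion \eqref{101}. If one prefers not to expand $\Phi$ at $z=0$, one can route the comparison through the completions instead: since $\curly{J}\,e^{-xJ_1}=\sum_{n}\tfrac{K_n}{n!}x^n$ and the multiplier $e^{-x}$ in $\Phi(z+\tau,\tau)=e^{-x}\Phi(z,\tau)$ cancels against $J_1(z+\tau,\tau)=J_1(z,\tau)-1$, the series $\Phi\,e^{-xJ_1}$ is doubly periodic and modular, its $x^n$-coefficient therefore lies in $\BV_n$, and it can be matched with $K_n$ using Lemma~\ref{poleKn} and the parity $(-1)^n$ of the $K_n$.
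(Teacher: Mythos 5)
Your proof is correct, and it takes a genuinely different route from the paper's. The paper works term by term in $n$: it derives the elliptic transformation law of $F_n$ directly from that of $\theta_1$, then runs an induction in which the completions $\widetilde{K_n}=\sum_k(-1)^{n+k}\binom{n}{k}F_kF_1^{n-k}$ are compared with $K_n$ via their principal parts, and the remaining constant in $z$ is pinned down by the combinatorial identity \eqref{300}, itself extracted from the Taylor expansion of $\theta_1^\bullet/\theta_1=J_1$ (with the odd case treated separately). You instead sum everything into generating functions at the outset, observe that $\sum_n F_nx^n/n!$ is by construction the binomial convolution giving your $\Phi$ (the right-hand side of Theorem \ref{ThetaThm}), and prove $\curly{J}=\Phi$ by a single Liouville argument: matching simple poles at the lattice (residue $x$ with respect to $w=2\pi iz$ at the origin on both sides), matching quasi-periodicity $f(z+\tau)=e^{-x}f(z)$ (Lemma \ref{102} summed against $x^n/n!$ on one side, the theta multiplier on the other), and matching values at $z=0$, where both sides equal $xJ_1(\tfrac{x}{2\pi i})=1+\sum_{n\ge1}\tfrac{B_{2n}E_{2n}}{(2n)!}x^{2n}$ by \eqref{004} and \eqref{101}; induction on the $x$-degree then upgrades the quasi-periodicity of $\curly{J}-\Phi$ to genuine double periodicity of each coefficient, which is entire and vanishes at $0$, hence is zero. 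The rigidity principle is the same in both proofs, but your packaging treats even and odd $n$ uniformly, dispenses with the binomial manipulations and with \eqref{300}, and isolates the single seed identity $J_1=\theta_1^\bullet/\theta_1$ as the only analytic input; what it gives up is the term-by-term information (the completions $\widetilde{K_n}$ and the identity \eqref{300}) that the paper's induction produces along the way. If you write this up, do record explicitly that every pole of every $J_n$, and of every $x^n$-coefficient of $\Phi$, is simple and supported on the lattice, so that matching residues at $0$ together with quasi-periodicity really does make each coefficient of the difference entire.
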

Note that Theorem \ref{FnJn} directly implies Theorem \ref{ThetaThm}.

\begin{proof} 
Differentiationg the equation
\[ \theta_1(z + \lambda \tau,\tau) = - e^{-2 \pi i (\lambda z + \frac{1}{2} \lambda^2 \tau)} \theta_1(z, \tau) \]
we find
\[ \theta_1^{k \bullet}(z + \lambda \tau, \tau) = - \sum_{l = 0}^{k} (-1)^{l + k} \binom{k}{l} e^{-2 \pi i (\lambda z + \frac{1}{2} \lambda^2 \tau)} \lambda^{k - l} \theta_1^{l \bullet}. \]

Therefore, independent of $h_k$,
\begin{align*}
 F_n(z + \lambda \tau) & = \frac{1}{\theta_1} \left( \sum_{k = 0}^n \binom{n}{k} h_{n-k} \sum_{l = 0}^{k} (-1)^{l + k} \binom{k}{l} \lambda^{k - l} \theta_1^{l \bullet} \right) \\
 & = \frac{1}{\theta_1} \left( \sum_{k = 0}^n \sum_{l = 0}^k \binom{n}{n - k + l} \binom{n - k + l}{l} (-1)^{n - k} h_{k - l} \lambda^{n - k} \theta_1^{l \bullet} \right) \\
 & = \frac{1}{\theta_1} \left( \sum_{k = 0}^n (-1)^{n+k} \lambda^{n - k} \binom{n}{k} \sum_{l = 0}^k \binom{k}{l} h_{k-l} \theta_1^{l \bullet} \right) \\
 & = \sum_{k = 0}^n (-1)^{n+k} \binom{n}{k} \lambda^{n - k} F_k.
\end{align*}

We proceed by induction on $n$. For $n = 0$ nothing is to prove and $n = 1$ follows from \eqref{004}. Assume now, that the claim of the theorem is true for all $k < n$, with $n \geq 2$. Let
\[ \widetilde{K_n} = \sum_{k = 0}^{n} (-1)^{n+k} \binom{n}{k} F_k F_1^{n-k} \]
and note that the recursion relation \eqref{Knrecursive} holds for $\widetilde{K_n}$ as well.\\

Let $n = 2m$ be even. Then, for a fixed $\tau$, $F_{2m}(z,\tau)$ doesn't have any poles for $z \in \{ \lambda + \mu \tau \mid 0 \leq \lambda, \mu < 1 \}$. Indeed, $\theta_1^{2k \bullet} = 2^k \theta_1^{k'}$ has a zero of order 1 at $z = 0$ and hence $\theta_1^{2k \bullet}/\theta_1$ extends to a holomorphic function at $z = 0$. By induction we conclude that the principal part of $\widetilde{K_n}$ equals the principal part of $K_n$. Therefore it is left to show that $F_{2m}(0) = J_{2m}(0) = B_{2m} E_{2m}$. This is equivalent to the identity,
\begin{equation} \sum_{k = 0}^{m} \binom{2m}{2k} h_{2m-2k} \frac{\theta_1^{2k\bullet}}{\theta_1}(0) = B_{2m} E_{2m}. \label{300} \end{equation}

As $\theta_1^{2k\bullet}/\theta_1$ is an even holomorphic function,
\[ \Big( \frac{\theta_1^{2k\bullet}}{\theta_1} \Big)^{\bullet} = \frac{\theta_1^{(2k+1)\bullet}}{\theta_1} - \frac{\theta_1^{2k\bullet}}{\theta_1} \frac{\theta_1^{\bullet}}{\theta_1} \]
vanishes to first order at $0$. Comparing poles and using $\theta_1^{\bullet}/\theta_1 = 1/w + O(w)$, we obtain
\begin{equation} (\theta_1^{2k \bullet}/\theta_1)(0,\tau) = \text{Res}_{w = 0}\Big( \frac{\theta_1^{(2k+1)\bullet}}{\theta_1} \Big) = \frac{\theta_1^{(2k+1)\bullet}(0,\tau)}{\theta_1^\bullet(0,\tau)}, \label{302} \end{equation}
where we used the Taylor expansion $\theta_1 = \sum_{k \geq 0} \frac{\theta_1^{(2k+1)\bullet}(0)}{(2k + 1)!} w^{2k+1}$ for the second equation and $\text{Res}$ denotes the residuum.

Therefore \eqref{300} reduces to
\[ \sum_{k = 0}^{m} \binom{2m}{2k} (2m - 2k)! \widetilde{h}_{2(m-k)} \theta_1^{(2k+1)\bullet}(0) = B_{2m} E_{2m}, \]
which follows from comparing the $2m-1$-th Taylor coefficient of the left and right hand side of $\frac{1}{\theta_1(z,\tau)} \cdot \theta_1(z,\tau)^\bullet = J_1$.

The case $n = 2m + 1$ odd is similar and ommited.
\end{proof}

\begin{rmk}
Comparing the $w^{2n}$ coefficient of $1/\theta_1(z,\tau) \cdot \theta_1(z,\tau) = 1$ using \eqref{301}, we obtain the relation
\begin{equation} \sum_{k = 0}^{m} \widetilde{h}_{2m-2k}(\tau) \frac{\theta_1^{(2k+1)\bullet}(0,\tau)}{(2k+1)!} = 0. \label{303} \end{equation}
By \eqref{302},
\[ \theta_1^{(2k+1)\bullet}(0,\tau) = \theta_1^{\bullet}(0,\tau) \cdot (\theta_1^{2k \bullet}/\theta_1)(0,\tau) = \theta_1^{\bullet}(0,\tau) \cdot (\theta_1^{k \prime}/\theta_1)(0,\tau). \]
Let $P_k = (\theta_1^{k \prime}/\theta_1)(0,\tau)$. Then using \eqref{008}, $P_1 = E_2/8$ and taking the derivative of $P_n$,
\begin{equation} P_{n+1} = P_n' + \frac{1}{8} E_2 P_n. \label{304} \end{equation}
By \eqref{303} and \eqref{304}, we obtain a recursion relation for the function $h_n$.

The first few non-trivial identities given by Theorem \ref{FnJn} then read,
\begin{align*}
J_2 & = \frac{1}{\theta_1} ( \theta_1^{\bullet \bullet} - \frac{1}{12} E_2 \theta_1 ) \\
 J_3 & = \frac{1}{\theta_1} ( \theta_1^{\bullet \bullet \bullet} - \frac{1}{4} E_2 \theta_1^\bullet ) \\
J_4 & = \frac{1}{\theta_1} ( \theta_1^{\bullet \bullet \bullet \bullet} - \frac{1}{2} E_2 \theta_1^{\bullet \bullet} + (-\frac{1}{10} E_2' + \frac{7}{240} E_2^2) \theta_1.
\end{align*}
\end{rmk}

\subsection{Applications} \label{thetaapplication}
Let $\curly{J} = \sum_{n \geq 0} J_n x^n/n!$ as in \eqref{007}. By \eqref{diffrelation},
\[ \partial_{\tau} \curly{J} = \Big( \frac{\partial}{\partial x} - \frac{1}{x} \Big) \partial_z \curly{J}, \]
where $\partial_{\tau} = \frac{1}{2 \pi i} \frac{\partial}{\partial \tau}$ and $\partial_{z} = \frac{1}{2 \pi i} \frac{\partial}{\partial z}$. By Theorem \ref{ThetaThm}, this implies relation among differentials of $\theta_1(z,\tau)$. For example extracting the $x^3$ coefficient, we have
\begin{cor}
\[ \theta_1^{\bullet\bullet\bullet \bullet} \theta_1 - 4 \theta_1^{\bullet \bullet \bullet} \theta_1^\bullet + 3 \theta_1^{\bullet \bullet} \theta_1^{\bullet \bullet} - \theta_1 \theta_1^{\bullet \bullet} E_2 + (\theta_1^{\bullet})^2 E_2 + \frac{1}{2} \theta_1^2 E_2' = 0 \]
\end{cor}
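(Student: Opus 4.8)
The plan is to read the identity off from the ``trivial relation'' \eqref{diffrelation} after translating the deformed Eisenstein series into $\theta_1$ via Theorem~\ref{FnJn}. The relevant instance is \eqref{diffrelation} at level $k=2$, namely $\tfrac23 J_3^{\bullet} = J_2'$ — equivalently, a coefficient of the generating-function identity $\partial_\tau \curly{J} = \big(\tfrac{\partial}{\partial x} - \tfrac1x\big)\partial_z\curly{J}$. The inputs from Theorem~\ref{FnJn} are the closed forms $J_2 = \tfrac1{\theta_1}\big(\theta_1^{\bullet\bullet} - \tfrac1{12}E_2\theta_1\big)$ and $J_3 = \tfrac1{\theta_1}\big(\theta_1^{\bullet\bullet\bullet} - \tfrac14 E_2\theta_1^{\bullet}\big)$ recorded in the remark above, together with the heat equation $\theta_1^{\bullet\bullet} = 2\theta_1'$ used in the proof of that theorem; the latter is what lets one remove all $\tau$-derivatives of $\theta_1$, so that only $E_2'$ survives in the final relation.

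Concretely, I would multiply \eqref{diffrelation} through by $\theta_1^2$ and evaluate the two sides separately. For $\theta_1^2 J_3^{\bullet}$: differentiate $\theta_1 J_3 = \theta_1^{\bullet\bullet\bullet} - \tfrac14 E_2\theta_1^{\bullet}$ with respect to $z$, solve for $\theta_1 J_3^{\bullet}$, substitute the formula for $J_3$ back in, and multiply by $\theta_1$; this writes $\theta_1^2 J_3^{\bullet}$ as a combination of $\theta_1^{\bullet\bullet\bullet\bullet}\theta_1$, $\theta_1^{\bullet\bullet\bullet}\theta_1^{\bullet}$, $E_2\theta_1^{\bullet\bullet}\theta_1$ and $E_2(\theta_1^{\bullet})^2$. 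For $\theta_1^2 J_2'$: differentiate $\theta_1 J_2 = \theta_1^{\bullet\bullet} - \tfrac1{12}E_2\theta_1$ with respect to $\tau$, use the heat equation to replace $(\theta_1^{\bullet\bullet})'$ by $\tfrac12\theta_1^{\bullet\bullet\bullet\bullet}$ and $\theta_1'$ by $\tfrac12\theta_1^{\bullet\bullet}$, then solve for $\theta_1 J_2'$, substitute the formula for $J_2$ back in, and multiply by $\theta_1$; here the $E_2\theta_1^{\bullet\bullet}\theta_1$ contributions cancel, leaving $\theta_1^2 J_2'$ in terms of $\theta_1^{\bullet\bullet\bullet\bullet}\theta_1$, $(\theta_1^{\bullet\bullet})^2$ and $E_2'\theta_1^2$. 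Forming $\tfrac23\theta_1^2 J_3^{\bullet} - \theta_1^2 J_2'$, which vanishes, and clearing the overall rational factor then produces exactly the six-term identity in the statement.

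I do not expect a genuine obstacle: once the two auxiliary expressions are in hand the rest is a single linear combination, and the only real risk is an arithmetic slip in the Bernoulli/binomial constants. Two cheap checks control this: the weights of the two sides match, and restricting to $z=0$ must collapse the identity to $\theta_1^{\bullet\bullet\bullet}(0,\tau) = \tfrac14 E_2\,\theta_1^{\bullet}(0,\tau)$, which follows from the remark (it is $2P_1$ with $P_1 = E_2/8$, using the heat equation). Finally, the same procedure run on \eqref{diffrelation} at levels $k\ge 3$, using the general form $J_n = \tfrac1{\theta_1}\sum_{j=0}^{n}\binom{n}{j} h_{n-j}\theta_1^{j\bullet}$ of Theorem~\ref{FnJn}, yields the whole announced sequence of relations among derivatives of $\theta_1$, now also involving $\theta_1^{5\bullet}$ and higher.
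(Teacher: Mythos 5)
Your proposal is correct and is essentially the paper's own argument: the paper derives the identity from the generating-function form of \eqref{diffrelation} combined with Theorem \ref{ThetaThm}, which is exactly your use of $\tfrac23 J_3^{\bullet}=J_2'$ together with the closed forms of $J_2,J_3$ and the heat equation; your computation reproduces the six-term identity with the right coefficients. (Minor note: the relation you use corresponds to the $x^2$ coefficient of $\partial_\tau\curly{J}=(\partial_x-\tfrac1x)\partial_z\curly{J}$, not $x^3$ as the paper states -- the $x^3$ coefficient would involve $\theta_1^{5\bullet}$ -- so your identification of the relevant level is in fact the accurate one.)
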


\subsection{The other theta functions}
We consider an analog of deformed Eisenstein series corresponding to the theta functions $\theta_2, \theta_3, \theta_4$. For $n \geq 1$, define
\begin{align*}
 J_{2,n} & = 2 J_n(2z, 2\tau) - J_n(z, \tau) \\
 J_{3,n} & = 2^{2-n} J_n(2z, \tau) - 2 J_n(2z,2\tau) + J_n(z, \tau) - 2^{1 - n} J_n(z, \tau/2) \\
 J_{4,n} & = \frac{1}{2^{n - 1}} J_n(z, \tau/2) - J_n(z, \tau).
\end{align*}
Concretely, we have
\begin{equation} \label{305}
\begin{aligned}
 J_{2,n}(z, \tau) & = \delta_{n,1} \frac{p}{p + 1} + B_n - n \sum_{k, r \geq 1} (-1)^k r^{n - 1} (p^k + p^{-k}) q^{k r} \\
 J_{3,n}(z, \tau) & = -B_n(1 - \frac{1}{2^{n - 1}}) - n \sum_{k , r \geq 1} (r - \frac{1}{2})^{n - 1} (-1)^k (p^k + (-1)^n p^{-k}) q^{k (r - \frac{1}{2})} \\
 J_{4,n}(z, \tau) & = -B_n(1 - \frac{1}{2^{n - 1}}) - n \sum_{k , r \geq 1} (r - \frac{1}{2})^{n - 1} (p^k + (-1)^n p^{-k}) q^{k (r - \frac{1}{2})}.
\end{aligned}
\end{equation}

The statements of section \ref{0000} apply with minor modifications also to the $J_{i,n}$. In particular we can define periodic $K_{i,n}$, find relations and express the derivatives of $J_{i,n}$ in terms of $J_{i,n}$ itself.

Let
\begin{align*}
 \theta_2(z,\tau) & = \theta_1(z + \frac{1}{2}, \tau) \\
 \theta_3(z,\tau) & = q^{1/8} p^{1/2} \theta_1(z + \frac{1}{2} \tau + \frac{1}{2}, \tau) \\
 \theta_4(z,\tau) & = \theta_3(z + 1/2, \tau) = -i q^{1/8} p^{1/2} \theta_1(z + 1/2 \tau, \tau)
\end{align*}
be the other theta functions. We state the analog of Theorem \ref{ThetaThm}.
\begin{thm} We have
\[ \sum_{n \geq 0} J_{i,n}(z,\tau) \frac{x^n}{n!} = x\ \frac{\theta_1^\bullet(0, \tau)}{\theta_1(\frac{x}{2 \pi i})}\ \frac{ \exp(x \partial_p) \cdot \theta_i(z,\tau) }{\theta_i(z,\tau) }. \]
\end{thm}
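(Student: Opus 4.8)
The plan is to reduce the assertion to an identity among the deformed Eisenstein series $J_n$ alone, and then to verify that identity by comparing Fourier expansions. First I would rewrite the right hand side using the relations expressing $\theta_2,\theta_3,\theta_4$ through $\theta_1$. From the definitions above, $\theta_i(z,\tau)=c_i(z,\tau)\,\theta_1(z+w_i,\tau)$ with
\[
(w_2,c_2)=(\tfrac12,\,1),\qquad (w_3,c_3)=(\tfrac12\tau+\tfrac12,\,q^{1/8}p^{1/2}),\qquad (w_4,c_4)=(\tfrac12\tau,\,-iq^{1/8}p^{1/2}).
\]
Each $c_i$ has the form $A_i(\tau)\,e^{\pi i m_i z}$ with $m_i\in\{0,1\}$, so $c_i(z+\tfrac{x}{2\pi i},\tau)=e^{\kappa_i x}c_i(z,\tau)$ with $\kappa_i:=m_i/2$ (hence $\kappa_2=0$, $\kappa_3=\kappa_4=\tfrac12$). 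Since $\exp(x\partial_z)f(z)=f(z+\tfrac{x}{2\pi i})$ by Taylor's theorem, this yields
\[
\frac{\exp(x\partial_z)\theta_i(z,\tau)}{\theta_i(z,\tau)}
= e^{\kappa_i x}\,\frac{\exp(x\partial_z)\theta_1(z+w_i,\tau)}{\theta_1(z+w_i,\tau)} .
\]
Multiplying by $x\,\theta_1^\bullet(0,\tau)/\theta_1(\tfrac{x}{2\pi i},\tau)$ and invoking Theorem~\ref{ThetaThm} with $z$ replaced by $z+w_i$, the right hand side of the stated identity becomes $e^{\kappa_i x}$ times the generating series $\curly{J}$ of \eqref{007}, evaluated at $(z+w_i,\tau)$. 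Forming the Cauchy product with $e^{\kappa_i x}$ and extracting the coefficient of $x^n/n!$, the theorem becomes equivalent to the family of identities
\[
J_{i,n}(z,\tau)=\sum_{k=0}^{n}\binom{n}{k}\,\kappa_i^{\,n-k}\,J_k(z+w_i,\tau)\qquad(n\ge 0,\ i\in\{2,3,4\}),
\]
which I will call $(\star)$; these involve no theta functions.

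It remains to prove $(\star)$ from the Fourier expansion \eqref{000}. For $i=2$ we have $\kappa_2=0$, so only the term $k=n$ survives and $(\star)$ reads $J_{2,n}(z,\tau)=J_n(z+\tfrac12,\tau)$; substituting $p\mapsto -p$ in \eqref{000} produces exactly the expansion \eqref{305} of $J_{2,n}$, so this case is immediate. For $i=3,4$ the shift $z\mapsto z+w_i$ replaces $p$ by $pq^{1/2}$ (resp.\ $-pq^{1/2}$) in \eqref{000}, so each monomial $p^{\pm k}q^{kr}$ becomes $p^{\pm k}q^{k(r\pm1/2)}$ (with an extra $(-1)^k$ for $i=3$), while the $\delta_{n,1}$-term is turned into the genuine $q^{1/2}$-series $\tfrac{pq^{1/2}}{pq^{1/2}-1}=-\sum_{j\ge1}p^jq^{j/2}$. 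Plugging this into the binomial sum of $(\star)$, I would reindex the $r$-summations of the $p^{k}$- and $p^{-k}$-parts separately, recombine them with the elementary identity $\sum_{r\ge0}q^{k(r+1/2)}=q^{k/2}/(1-q^k)$, and match the outcome term by term with the explicit expansion of $J_{i,n}$ in \eqref{305}. Equivalently, $(\star)$ is the coefficientwise form of the generating-function identity $\sum_{n\ge0}J_{i,n}(z,\tau)\tfrac{x^n}{n!}=e^{\kappa_i x}\,\curly{J}(z+w_i,\tau)$, which --- after inserting the definitions of the $J_{i,n}$ as combinations of $J_n$ at rescaled arguments and using Theorem~\ref{ThetaThm} again --- is an instance of the classical Landen/Gauss transformation formula for $\theta_1$.

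The conceptual content lies entirely in the reduction to $(\star)$, which is formal once the relations $\theta_i=c_i\,\theta_1(\,\cdot\,+w_i)$ are written down. The one genuine computation, and where I expect the main obstacle, is the bookkeeping in $(\star)$ for $i\in\{3,4\}$: one has to track two interacting index shifts --- the replacement $r\mapsto r\pm1$ caused by the factors $q^{\pm k/2}$, and the binomial convolution with the powers $\kappa_i^{\,n-k}$ --- and in particular to check that the constant term ($p^0q^0$) of the right side of $(\star)$ equals $-B_n(1-2^{1-n})$, which is the Bernoulli multiplication identity $B_n(\tfrac12)=(2^{1-n}-1)B_n$ in disguise. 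This is elementary but cannot be made purely formal, in contrast to everything preceding it.
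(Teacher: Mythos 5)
Your proposal is correct and follows essentially the same route as the paper: the identities you call $(\star)$ are precisely the formulas $J_{2,n}(z,\tau)=J_n(z+\tfrac12)$, $J_{3,n}=\sum_l\binom{n}{l}2^{l-n}J_l(z+\tfrac12+\tfrac12\tau)$, $J_{4,n}=\sum_l\binom{n}{l}2^{l-n}J_l(z+\tfrac12\tau)$ on which the paper's proof rests, verified from the expansions \eqref{305}, after which everything reduces to Theorem \ref{ThetaThm}. You merely make explicit the bookkeeping of the quasi-periodicity factors $c_i$ (the source of $e^{\kappa_i x}$) that the paper leaves implicit in the phrase ``reduces directly.''
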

\begin{proof}
With \eqref{305} one proves the formulas
\begin{align*}
 J_{2,n}(z, \tau) & = J_{n}(z + \frac{1}{2}) \\
 J_{3,n}(z, \tau) & = \sum_{l = 0}^n \binom{n}{l} \frac{1}{2^{n - l}} J_l(z + \frac{1}{2} + \frac{1}{2} \tau) \\
 J_{4,n}(z, \tau) & = \sum_{l = 0}^n \binom{n}{l} \frac{1}{2^{n - l}} J_l(z + \frac{1}{2} \tau).
\end{align*}
The claims then reduces directly to Theorem \ref{ThetaThm}.
\end{proof}

\bibliographystyle{hep}
\bibliography{SD}

\begin{thebibliography}{MPT10}

\bibitem[Cho97]{C}
Y.~Choie, \textsl{ Correspondence among {E}isenstein series
  {$E_{2,1}(\tau,z)$}, {$H_{3/2}(\tau)$} and {$E_2(\tau)$}},
\newblock Manuscripta Math. \textbf{ 93}(2), 177--187 (1997).

\bibitem[DMZ12]{DMZ}
A.~Dabholkar, S.~Murthy and D.~Zagier, \textsl{ Quantum Black Holes, Wall
  Crossing, and Mock Modular Forms},
\newblock (2012), {arXiv:1208.4074}.

\bibitem[EZ85]{EZ}
M.~Eichler and D.~Zagier,
\newblock \textsl{ The theory of {J}acobi forms}, volume~55 of \textsl{
  Progress in Mathematics},
\newblock Birkh\"auser Boston Inc., Boston, MA, 1985.

\bibitem[FP97]{FP}
W.~Fulton and R.~Pandharipande,
\newblock Notes on stable maps and quantum cohomology,
\newblock in \textsl{ Algebraic geometry---{S}anta {C}ruz 1995}, volume~62 of
  \textsl{ Proc. Sympos. Pure Math.}, pages 45--96, Amer. Math. Soc.,
  Providence, RI, 1997.

\bibitem[GK09]{GK}
M.~R. Gaberdiel and C.~A. Keller, \textsl{ Differential operators for elliptic
  genera},
\newblock Commun. Number Theory Phys. \textbf{ 3}(4), 593--618 (2009).

\bibitem[MPT10]{MPT}
D.~Maulik, R.~Pandharipande and R.~P. Thomas,
\newblock Curves on K3 surfaces and modular forms,
\newblock J.Topol.3:937-996,2010, 2010.

\bibitem[MTZ08]{MTZ}
G.~Mason, M.~P. Tuite and A.~Zuevsky, \textsl{ Torus {$n$}-point functions for
  {$\Bbb R$}-graded vertex operator superalgebras and continuous fermion
  orbifolds},
\newblock Comm. Math. Phys. \textbf{ 283}(2), 305--342 (2008).

\bibitem[Obe14]{HilbK3}
G.~Oberdieck,
\newblock Gromov-Witten invariants of the Hilbert scheme of points of a K3
  surface, 2014,
\newblock arXiv:1406.1139.

\bibitem[PT14]{KKV}
R.~Pandharipande and R.~P. Thomas,
\newblock The Katz-Klemm-Vafa conjecture for K3 surfaces, 2014,
\newblock arXiv:1404.6698.

\bibitem[Ram00]{R}
S.~Ramanujan,
\newblock On certain arithmetical functions [{T}rans. {C}ambridge {P}hilos.
  {S}oc. {\bf 22} (1916), no. 9, 159--184],
\newblock in \textsl{ Collected papers of {S}rinivasa {R}amanujan}, pages
  136--162, AMS Chelsea Publ., Providence, RI, 2000.

\end{thebibliography}

\vspace{+8 pt}
\noindent
Departement Mathematik\\
ETH Z\"urich\\
georgo@math.ethz.ch

\end{document}